\newtheorem{thm}{Theorem}[section]
\newtheorem{lemma}[thm]{Lemma}
\newtheorem{conj}[thm]{Conjecture}
\newtheorem{prob}[thm]{Problem}
\theoremstyle{remark}
\newtheorem{remark}[thm]{Remark}
\newcommand{\p}{\mathfrak{p}}
\newcommand{\OO}{\mathcal{O}}
\renewcommand{\thefootnote}{\fnsymbol{footnote}}
\newcommand\blfootnote[1]{%
  \begingroup
  \renewcommand\thefootnote{}\footnote{#1}%
  \addtocounter{footnote}{-1}%
  \endgroup
}
\title{On the fourth power level of $\mathfrak{p}$-adic completions  of biquadratic number fields }
\author{Kazimierz Chomicz}
\date{March 2025}
\begin{document}


\maketitle

\abstract{Let $K$ be a number field and $\mathfrak{p} \mid (2)$ be a prime ideal. We compute the fourth level of the $\mathfrak{p}$-adic completions of $K$ when the ramification index is $4$ and the inertial degree is trivial for the ideal $\mathfrak{p}$. This enables the computation of the fourth level of any $\mathfrak{p}$-adic completion of any quartic number field. Here we apply this result to biquadratic number fields and obtain lower bounds for the fourth level of such number fields.}

\section{Introduction}

\blfootnote{\text{Keywords: Fourth level, Hensels Lemma, biquadratic number fields.}}
\blfootnote{\text{2020 Mathematics Subject Classification: 11R16, 11E76, 11P05}}

Let $K$ be a field. We define the fourth level of $K$, denoted by $s_4(K)$ as the smallest positive integer $g$ such that $x_1^4 + x_2^4 + \dots + x_g^4 = -1$ is solvable in $K$. Of course, such $g$ may not exist, in this case we put $s_4(K)=\infty $. Usually, finding lower or upper bounds on $s_4$ is a hard and technical task. This problem seems to be of similar difficult as that of computing the Waring numbers (Pythagoras numbers) of a field (cf. \cite{km2024}). In general, the Waring number $w_d(K)$ is the smallest positive integer $g$ such that any sum of $d$th powers can be expressed as a sum of at most $g$ $d$th powers. Of course such number may not exist. If $K$ is a field with a finite second level $s_2$ then all of its higher levels are finite also (cf. \cite{joly1970}). 

There is a close connection between the levels of a field and its Waring numbers. Assume that characteristic of $K$ is 0, then
$$w_n(K)\leq  nG(n)(s_n(K)+1). $$
Here, $G(n)$ is the big $G$ function in the classical Waring problem (see \cite{km2024} for some of the recent developments on the Waring problem in various cases).
Apart from fields with a finite level one can consider rings and fields with infinite second (and higher) level. This is a rich and difficult theory of sums of $2d$th powers in real algebraic geometry as well as in totally real number fields (cf. \cite{ bk2023, bk2024, grimm2015, km2024, kv2023, krasensky2022, krs2022,  ky2023}).

Here, we will be interested in finding lower bounds for $s_4(K)$, where $K$ is a quartic number field, by computing the fourth level of their $\p$-adic completions. Our goal is to generalize results from \cite{paper-per-piotr}, where similar problem was considered for quadratic and cubic number fields. In section \ref{cef} we will build towards results for an arbitrary quartic number field, however later in section \ref{step2}, we will switch to only consider biquadratic number fields $K = \mathbb{Q}(\sqrt{m},\sqrt{n})$ for some integers $n,m$.

For an algebraic number field $K$ let $K_{\mathfrak{p}}$ denote the $\mathfrak{p}$-adic completion of $K$ at a prime ideal $\mathfrak{p}$ of $\mathcal{O}_K$ -- the ring of integers of $K$. We refer the reader for more information about completion to \cite[Chapter 7]{eisenbud1995}. From $K \subseteq K_{\mathfrak{p}}$, we get a lower bound $s_4(K) \geq s_4(K_{\mathfrak{p}})$.  


For a prime ideal $\p \nmid (2)$ in $\OO_K$, the computation of the fourth level of $K_\p$ is reduced to the computation of the fourth level of the residue field. The residue field will be a finite field of characteristic different of 2 and those values are known (cf. \cite{paper-per-piotr}). Hence, throughout the paper we will only consider prime divisors of the ideal $(2)$.

From \cite[Theorem 21]{marcus2018} if $(2) = \mathfrak{p}_1^{e_1}\mathfrak{p}_2^{e_2} \cdots \mathfrak{p}_r^{e_r}$ and $K$ is a quartic number field, we have $4 = [K : \mathbb{Q}] = \sum_{i=1}^r e_if_i$, where $e_i = e(\p_i)$ is the ramification index and $f_i = f(\p_i)$ is the inertial degree. In particular this means $ef \leq 4$, where $e=e(\mathfrak{p})$ and $f=f(\mathfrak{p})$ for a fixed prime ideal $\mathfrak{p}$ which divides $(2)$ in $\OO_K$.

Let us now recall four facts directly proved in \cite{paper-per-piotr} (here $s=s_4(K_\p)$).
\begin{thm}\label{1.1} 
We have an equivalence  $e=f=1 \iff s=15$. 
\end{thm}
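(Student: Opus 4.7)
The plan is to exploit the fact that when $e=f=1$ the completion $K_\p$ is small enough to identify explicitly, then reduce to a finite-ring computation via the Hensel's lemma principle recalled in the introduction, and finally invoke the exhaustive $(e,f)$-classification for the converse.

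For the forward direction, observe that $[K_\p:\mathbb{Q}_2]=ef=1$ forces $K_\p \cong \mathbb{Q}_2$ and $\OO_{K_\p} \cong \mathbb{Z}_2$. Applying the reduction $s_4(K_\p)=s_4(\OO_K/\p^{4e+1})$ with $4e+1=5$, it suffices to compute $s_4(\mathbb{Z}/32\mathbb{Z})$ subject to the constraint that at least one summand is a unit. I would enumerate the fourth powers modulo $32$: a direct check gives $x^4 \bmod 32 \in \{0,1,16,17\}$, with odd $x$ contributing $1$ or $17$ (depending on $x \bmod 8$) and even $x$ contributing $0$ or $16$. Writing a putative sum as $a\cdot 1 + b\cdot 17 + c\cdot 16 + d\cdot 0 \equiv 31 \pmod{32}$ with $a+b\geq 1$, I would use $17b \equiv b+16b$ to rewrite the congruence as $a+b+16(b+c)\equiv 31 \pmod{32}$. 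Splitting on the parity of $b+c$ yields either $a+b\equiv 31$ or $a+b\equiv 15 \pmod{32}$, so in either case $g=a+b+c+d \geq 15$. Equality is attained by fifteen copies of $3$, since $15 \cdot 3^4 = 1215 \equiv -1 \pmod{32}$; applying Hensel's lemma then lifts this to a genuine $\mathbb{Q}_2$-solution.

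For the converse, I would invoke the overall classification: the five remaining pairs $(e,f) \in \{(1,2),(1,4),(2,1),(2,2),(4,1)\}$ permitted by the Galois biquadratic setup are covered by the companion theorems of the paper and yield values of $s_4$ distinct from $15$, so $s=15$ forces $e=f=1$. The main obstacle is the lower-bound half of the finite computation, i.e.\ genuinely ruling out $g\leq 14$: the modular split above settles it cleanly, but one must carefully preserve the Hensel-driven unit constraint $a+b\geq 1$, since without it the achievable residues (only $0$ and $16$) would change the arithmetic entirely.
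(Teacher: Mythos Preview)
The paper does not supply a proof of this statement; Theorem~\ref{1.1} is quoted from \cite{paper-per-piotr} as one of three background facts, so there is no in-paper argument to compare against.

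Your forward implication is correct and is essentially the classical computation: $e=f=1$ forces $K_\p\cong\mathbb{Q}_2$, the Hensel reduction lands in $\mathbb{Z}/32\mathbb{Z}$, the list of fourth-power residues $\{0,1,16,17\}$ is right, and your parity split on $b+c$ gives the lower bound $a+b\geq 15$ cleanly, with $15\cdot 3^4\equiv -1\pmod{32}$ witnessing equality. One minor comment: the unit constraint $a+b\geq 1$ that you single out as the ``main obstacle'' is automatic here, since $-1$ is a unit and a sum of non-units lies in $\p$; indeed your own congruence already forces $a+b\geq 15$, so there is nothing delicate to preserve.

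Your converse is logically sound in the biquadratic context but not self-contained. You dispose of $(e,f)\in\{(1,2),(1,4),(2,2)\}$ via Theorem~\ref{1.2}, of $(2,1)$ via Theorem~\ref{1.3}, and of $(4,1)$ by implicitly appealing to Theorem~\ref{main}, which is proved only later in the paper. There is no circularity---Theorem~\ref{main} nowhere invokes Theorem~\ref{1.1}---but the result in \cite{paper-per-piotr} predates this paper and therefore cannot have relied on Theorem~\ref{main}. If the equivalence is intended for arbitrary $K_\p$ with $\p\mid(2)$ rather than just for the five degree-four pairs, you would need a direct argument that $e>1$ or $f>1$ forces $s<15$, not an exhaustion of the biquadratic list. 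For the way Theorem~\ref{1.1} is actually used in this paper (only in Case~3.4(a) of Theorem~\ref{main2}), your reduction is adequate.
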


\begin{thm}\label{1.2} 
 If $f$ is even then $s \leq 2$. If $s=1$ then $4 \mid e$. 

\end{thm}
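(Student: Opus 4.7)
The plan is to prove the two assertions separately, both by localizing the relevant arithmetic to structural facts about $K_\p$ and using Hensel's Lemma / multiplicativity of ramification.

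For the first claim, I would exploit the fact that when $f$ is even, the residue field $\OO_K/\p \cong \mathbb{F}_{2^f}$ contains $\mathbb{F}_4$, so the polynomial $g(x)=x^2+x+1$ has two distinct roots modulo $\p$. The first step is to lift one of them to an actual root $\omega \in \OO_{K_\p}$ of $g$. This is the place to invoke Hensel: for any lift $b \in \OO_{K_\p}$ of a residual root, we have $v_\p(g(b))\geq 1$, while $g'(b)=2b+1$ satisfies $v_\p(g'(b))=0$ because $v_\p(2)=e\geq 1$ and so $2b+1\equiv 1 \pmod{\p}$. The Hensel condition $v_\p(g(b))>2v_\p(g'(b))=0$ is therefore automatic. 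Once $\omega\in\OO_{K_\p}$ satisfies $\omega^2+\omega+1=0$, the second step is the explicit identity $\omega^4+(\omega^2)^4=\omega+\omega^2=-1$, which follows from $\omega^3=1$. This produces the required representation of $-1$ as a sum of two fourth powers in $K_\p$.

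For the second claim, I would run a descent-of-ramification argument. Assuming $s=1$, there exists $\alpha\in K_\p$ with $\alpha^4=-1$; such an $\alpha$ has order exactly $8$ in $K_\p^\times$, so $K_\p$ contains a primitive $8$-th root of unity. Therefore the tower of local fields $\mathbb{Q}_2 \subseteq \mathbb{Q}_2(\zeta_8) \subseteq K_\p$ makes sense. The next step is to show that $\mathbb{Q}_2(\zeta_8)/\mathbb{Q}_2$ is totally ramified of degree $4$: the substitution $x\mapsto x+1$ in $\Phi_8(x)=x^4+1$ produces $x^4+4x^3+6x^2+4x+2$, which is Eisenstein at $2$, so $e(\mathbb{Q}_2(\zeta_8)/\mathbb{Q}_2)=4$. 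By multiplicativity of ramification indices in the tower, $4 \mid e(K_\p/\mathbb{Q}_2)=e$.

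The main obstacle, or rather the only point that requires care, is in part one: one must verify that Hensel's Lemma applies even though the residual polynomial has repeated roots in characteristic $2$ considerations are always delicate. However, here the situation is benign because $g'(x)=2x+1$ is a unit modulo $\p$ for every residual root, so the lifting is entirely standard. The rest of the argument is book-keeping with roots of unity and ramification indices.
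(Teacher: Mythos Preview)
Your argument is correct on both counts. For the first assertion, the lift of a primitive cube root of unity via Hensel's Lemma is the standard move, and your verification that $g'(b)=2b+1$ is a $\p$-adic unit (so that the Hensel inequality $v_\p(g(b))\geq 1>0=2v_\p(g'(b))$ holds trivially) is exactly what is needed; the identity $\omega^4+(\omega^2)^4=\omega+\omega^2=-1$ then finishes it. For the second assertion, your observation that $\alpha^4=-1$ forces $\alpha$ to be a primitive $8$th root of unity, together with the Eisenstein argument for $\Phi_8(x+1)$ showing $\mathbb{Q}_2(\zeta_8)/\mathbb{Q}_2$ is totally ramified of degree $4$, and multiplicativity of $e$ in the tower $\mathbb{Q}_2\subseteq\mathbb{Q}_2(\zeta_8)\subseteq K_\p$, is clean and complete.

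Note, however, that the present paper does not itself prove Theorem~\ref{1.2}: it is recalled as a known fact from \cite{paper-per-piotr}, so there is no in-paper proof to compare against. Your argument is the expected one and would serve as a self-contained proof had the paper chosen to include it.
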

This theorem implies that $s=2$ for the cases $e=1, f=2$; $e=2, f=2$ and $e=1, f=4$.

\begin{thm}\label{1.3} 
 Let $e=2,\ f=1$. Let $\pi$ be any element of $\mathcal{O}_K $ such that $\mathfrak{p} \parallel \pi$ $($i.e. $\pi \in \mathfrak{p} \setminus \mathfrak{p}^2$ $)$. \\
Then $$s = \begin{cases}
    6 \text{ if } \pi^2 \equiv 2 \pmod{\mathfrak{p}^4}\\
    4 \text{ otherwise.}
\end{cases} $$
\end{thm}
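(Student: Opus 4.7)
By Hensel's lemma (as in the introduction, with $e = 2$), it suffices to compute $s_4(R)$ with $R := \OO_K/\p^9$. Set $\alpha := 2/\pi^2 \in \OO_{K_\p}^\times$; the hypothesis $\pi^2 \equiv 2 \pmod{\p^4}$ is equivalent to $\alpha \equiv 1 \pmod{\p^2}$. First I would classify fourth powers in $R$: if $v_\p(x) \geq 3$ then $x^4 = 0$; for $x = \pi^k u$ with $k \in \{0,1,2\}$ and $u$ a unit, $x^4 = \pi^{4k}u^4$. Writing $u = 1 + \pi t$ and using $4 = \alpha^2 \pi^4$ and $6 = 3\alpha\pi^2$,
\[
u^4 - 1 = \pi^4 (t^4 + 3\alpha t^2) + \alpha^2 \pi^5 t + \alpha^2 \pi^7 t^3.
\]
Since the residue field is $\mathbb{F}_2$, Fermat gives $t^2 \equiv t \pmod{\p}$, so $t^4 + 3\alpha t^2 \equiv 0 \pmod{\p}$, whence $u^4 \equiv 1 \pmod{\p^5}$. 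Consequently $(\pi u)^4 \equiv \pi^4 \pmod{\p^9}$ and $(\pi^2 u)^4 \equiv \pi^8 \pmod{\p^9}$.

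Next I would count. In a representation $\sum x_i^4 = -1$ let $U, A_1, A_2$ denote the numbers of summands with valuation $0, 1, 2$ (higher valuation contributes $0$). Reducing modulo $\p^5$ yields $U + A_1\pi^4 \equiv -1 \pmod{\p^5}$, and combining with $\p^5 \cap \mathbb{Z} = 8\mathbb{Z}$ forces either ($U \equiv 3 \pmod{8}$ with $A_1$ odd) or ($U \equiv 7 \pmod{8}$ with $A_1$ even), whence $s \geq U + A_1 \geq 4$. At the minimum $U = 3$, with $u_i = 1 + \pi t_i$ one computes
\[
\sum_{i=1}^{3} u_i^4 + 1 \;=\; \pi^4 \cdot A, \qquad A := \alpha^2 + 3\alpha p_2 + p_4 + \alpha^2 \pi p_1 + \alpha^2 \pi^3 p_3, \qquad p_k := \sum_i t_i^k.
\]
The equations for $s = 4, 5, 6$ become, respectively, $A \equiv -1$, $A \equiv -1 - \pi^4$, and $A \equiv -3$ modulo $\p^5$ (the alternative $s = 6$ case $(A_1, A_2) = (1, 2)$ reduces to the $s = 4$ equation since $2\pi^8 \equiv 0 \pmod{\p^9}$).

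Now split on $c := (\alpha - 1) \bmod \p$: $c = 0$ in Case A (the hypothesis) and $c = 1$ in Case B. Writing $t_i \equiv a_i + \pi b_i \pmod{\p^2}$ with $a_i, b_i \in \{0, 1\}$ and $\sigma := \sum a_i$, a direct expansion yields $A + 1 \equiv (c + 1)\bar\sigma\,\pi \pmod{\p^2}$, so Case A forces $\sigma$ even, while Case B imposes no obstruction at this level. Pushing to modulo $\p^3$ in Case A (using $\alpha^2 \equiv 1 \pmod{\p^4}$), the $t_i$-dependent contributions to the $\pi^2$-coefficient cancel -- each of $3\alpha p_2 + p_4$ and $\alpha^2 \pi p_1$ contributes $\bar\sigma_b\pi^2$ with $\sigma_b := \sum b_i$ -- leaving $A + 1 \equiv \pi^2 \pmod{\p^3}$ independently of the $t_i$. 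Hence $v_\p(A+1) = 2$ in Case A, which rules out $s = 4, 5$ (both targets require $A + 1 \in \p^3$, noting $\pi^4 \equiv 0 \pmod{\p^3}$). For $s = 6$ the target reads $A + 1 \equiv -\alpha\pi^2 \pmod{\p^5}$, and $-\alpha\pi^2 \equiv \pi^2 \pmod{\p^3}$ in Case A (using $\alpha \equiv 1 \pmod{\p^2}$ and $2\pi^2 \in \p^4$), so the $\p^3$-obstruction disappears. The remaining conditions at $\p^4$ and $\p^5$ are affine-linear in the higher $\pi$-digits of the $t_i$ and can be satisfied by an explicit (e.g., symmetric) choice, giving $s = 6$ in Case A. For Case B, the obstruction is already absent at $\p^2$, and a parallel, shorter descent produces $t_i$ with $A + 1 \equiv 0 \pmod{\p^5}$, so $s = 4$.

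The main obstacle is the rigidity in Case A: that the $\pi^2$-coefficient of $A + 1 \pmod{\p^3}$ is forced to $1$ regardless of the $t_i$. This requires tracking all five terms of $A$ through three levels of the $\p$-adic filtration and repeatedly invoking Fermat on $\mathbb{F}_2$ together with the relation $2 = \alpha\pi^2$. The construction of explicit witnesses for $s = 6$ (Case A) and $s = 4$ (Case B) is then computational, though care is needed in choosing the higher $\pi$-digits $b_i, c_i$ so that the residues at $\p^4$ and $\p^5$ align.
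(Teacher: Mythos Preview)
The paper does not prove Theorem~\ref{1.3}; it is quoted from the cited reference of Parnami--Agrawal--Rajwade, so there is no in-paper argument to compare yours against. Your write-up is therefore an independent attempt, and its strategy (reduce to $\OO_K/\p^{4e+1}$, expand $u^4$ for $u=1+\pi t$ using $2=\alpha\pi^2$, and filter by $\p$-adic valuation) is sound. The lower bounds are essentially correct: the mod $\p^5$ count forces $(U,A_1)$ into the two residue classes you describe, giving $s\ge 4$, and in Case~A the computation $A+1\equiv\pi^2\pmod{\p^3}$ independently of the $t_i$ indeed kills the targets for $s=4$ and $s=5$, yielding $s\ge 6$.

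Two issues remain. The minor one is notational: your definition $c:=(\alpha-1)\bmod\p$ is always $0$, since $f=1$ forces every unit to be $\equiv 1\pmod\p$. What you evidently mean (and what makes the subsequent formula $A+1\equiv(c+1)\bar\sigma\,\pi\pmod{\p^2}$ come out right) is $c:=\big((\alpha-1)/\pi\big)\bmod\p$, i.e.\ the $\pi^1$-digit of $\alpha$.

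The substantive gap is in the \emph{upper} bounds. For Case~A you assert that the residual conditions at $\p^4$ and $\p^5$ ``are affine-linear in the higher $\pi$-digits of the $t_i$ and can be satisfied,'' and for Case~B you simply say ``a parallel, shorter descent produces $t_i$.'' Neither claim is verified. Affine systems over $\mathbb{F}_2$ are not automatically consistent, and here the coefficients of the system depend on the higher $\pi$-digits of $\alpha$ (equivalently, on the field $K$) in a way you have not tracked; you must either exhibit explicit $t_1,t_2,t_3$ that work uniformly, or show that the relevant linear map from the digit-space onto $\p^3/\p^5$ (Case~A) respectively $\p^2/\p^5$ (Case~B) is surjective. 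Without this, you have proved only $s\ge 6$ in Case~A and $s\ge 4$ in Case~B, not equality. One economical fix is to note (as the paper does) that the exponent $3e+1=7$ already suffices, which shrinks the target from $\p^5$ to $\p^3$ after dividing by $\pi^4$; in Case~A your identity $A+1\equiv\pi^2\pmod{\p^3}$ then \emph{is} the full target $A+3\equiv 0\pmod{\p^3}$ (since $-2\equiv\pi^2\pmod{\p^3}$), and Case~B reduces to a single digit check.
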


\begin{thm}\label{1.4}
If $e = 3,\ f=1$, then $s=9$, and if $e=1, \ f=3$, then $s=5$.
    
\end{thm}

It is important to note that the above theorems work for any number field $K$, with a prime $\p \mid (2)$ with $e = e(\p)$ and $f = f(\p)$. The reason why these theorems where considered in \cite{paper-per-piotr} is that they where needed for computation of $s$ in the cases of quadratic and cubic number fields.

The only case not covered by the above which occurs for quartic number fields (remembering $ef \leq 4$) is the case $e = 4, \ f=1$. This case will be done by us similarly to Theorem \ref{1.3}, by considering an arbitrary element $\p \parallel \tau $ and producing a similar tree of cases on $s$ (although much more complicated) and will be worked on over any number field with a prime $\p \mid (2)$ with $e=4, \ f=1$. This we will do in the next section, which will provide the first main result of the paper, as this is a generalization of the results from \cite{paper-per-piotr}. Because we will do this over an arbitrary number field, similarly as we are using above theorems, our result can be used also when working with number fields of degree $>4$, whenever the case $e=4, \ f=1$ appears. Section 3 contains an application of our main theorem. In particular, we compute the factorization of $(2)$ for biquadratic number fields and compute the fourth level of the completions. This constitutes of our second result. We finish the paper with remarks which may stimulate further research.

We end this section with some technical details, important for the next section. Notice that because $K$ is a field, its fourth level $s$ is also the smallest number such that $x_1^4 + \dots + x_s^4 + x_{s+1}^4 = 0$ is non trivially solvable in $K$ (we can move $x_{i}^4 \neq 0$ for some $i$ to the other side and divide by it). 

Denote by $v_\p$ the $\p$-adic valuation of $K_\p$.  We say that $K_\mathfrak{p}$ satisfies Hensels Lemmma if for every polynomial $f(x) \in \OO_{K_\p}[x]$ if there exists $ b \in \OO_{K_\p}$ such that $v_\p(f(b))>2v_\p(f'(b))$ then there exists an element $a \in \OO_{K_\p}$ such that $f(a)=0$ and $v_\p(a-b)>v_\p(f'(b))$ (cf. \cite[Theorem 4.1.3]{ep2005}). 

In particular, by the Hensels Lemma for a prime ideal $\p \mid (2)$ we may reduce computing the fourth level of $K_\p$ to the computation of the fourth level of $\OO_K/\p^{4e+1}$. In this case, the quotient is a finite ring. This follows from the fact that from any solution of $x_1^4 + \dots + x_s^4+x_{s+1}^4=0$ in $K_\p$ we can construct one which is in $\OO_{K_\p}$ and has at least one  summand prime with $\p$. To see this we look at a solution $x_1^4 + x_2^4+ \dots + x_s^4 \equiv -1 \pmod{\p^{4e+1}}$ where $\p \nmid x_1$ as $x_1$ being a root of a polynomial $f(x) = x^4 + x_2^4 + \dots + x_s^4 + 1$ in $\OO_K /\p^{4e+1}$. Now we have $v_\p(f(x_1)) \geq 4e + 1 > 4e = 2v_\p(4x_1^3) = 2v_\p(f'(x_1))$, hence by Hensels Lemma we have a solution in $\OO_{K_\p}$. This implies that $s_4(K_\p) = s_4(\OO_K /\p^{4e+1})$ (cf. \cite{par1981}).

\section{Main result} \label{cef}
In this section we will calculate $s = s_4(K_{\mathfrak{p}})$ for any number field $K$ with a prime ideal $\p \mid (2)$ of $\OO_K$ with $e(\p) = e =4,\ f(\p)=f=1$. This is the first result not covered by \cite{paper-per-piotr} and constitutes the first main result of our paper.

Let $\tau$ be an element of $\mathcal{O}_K$ such that $\mathfrak{p} \parallel \tau$ (i.e. $\tau \in \mathfrak{p} \setminus \mathfrak{p}^2 $). We will make a frequent use of following fact: if $f=1$ then for $\mathfrak{p}^{\varphi} \parallel a$ and $\mathfrak{p}^{\varphi} \parallel b$ we have $\mathfrak{p}^{\varphi+1} \mid a+b$. This follows from the fact that $\mathfrak{p}^{\varphi}/\mathfrak{p}^{\varphi+1} \cong \mathcal{O}_K/\mathfrak{p} \cong \mathbb{F}_2$.

By the Hensels Lemma, $s$ is the smallest number such that $x_1^4 + \dots + x_{s+1}^4 \equiv 0 \pmod{\mathfrak{p}^{4e+1}}$ is solvable with $\mathfrak{p} \nmid x_1$. In \cite{paper-per-piotr} it was proven that the exponent $4e+1$ can actually be replaced by $3e+1$. This is useful, because we only need to compute $s_4(\mathcal{O}_K / \mathfrak{p}^{3e+1}) = s_4(\mathcal{O}_K / \mathfrak{p}^{4e+1}) = s_4(K_{\mathfrak{p}})$. Additionally, $\mathcal{O}_K / \mathfrak{p}^{3e+1}$ has strictly fewer elements than $\mathcal{O}_K / \mathfrak{p}^{4e+1}$. Let us start with following lemma
\begin{lemma}
Let $e>1$ and $f=1$. If  $x \equiv y \pmod{\mathfrak{p}^e}$  then $x^4 \equiv y^4 \pmod{\mathfrak{p}^{3e+1}}$.
\end{lemma}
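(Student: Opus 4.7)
The natural approach is to set $z := x - y$, so that $v_\p(z) \geq e$, and expand
\begin{equation*}
x^4 - y^4 \;=\; 4 y^3 z \;+\; 6 y^2 z^2 \;+\; 4 y z^3 \;+\; z^4,
\end{equation*}
then bound each of the four summands in terms of $a := v_\p(y) \geq 0$ and $b := v_\p(z) \geq e$. The hypothesis $f = 1$ forces $v_\p(2) = e$ and $v_\p(3) = 0$, and hence $v_\p(4) = 2e$ and $v_\p(6) = e$; these rational integer valuations are what drive all the subsequent estimates.

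The last two summands are immediate: $v_\p(4yz^3) \geq 2e + 3e = 5e$ and $v_\p(z^4) \geq 4e$, both at least $3e+1$ since $e \geq 1$. For the remaining two summands, the estimates are $v_\p(4y^3 z) = 2e + 3a + b \geq 3e + 3a$ and $v_\p(6 y^2 z^2) = e + 2a + 2b \geq 3e + 2a$. Whenever $a \geq 1$, both already lie in $\mathfrak{p}^{3e+1}$ individually. When $a = 0$ but $b > e$, the valuations become $2e + b > 3e$ and $e + 2b > 3e$ respectively, so again each of them lies in $\mathfrak{p}^{3e+1}$.

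The only delicate case, and the one that genuinely uses $f = 1$, is $a = 0$ and $b = e$ exactly: then $v_\p(4y^3 z) = v_\p(6 y^2 z^2) = 3e$, so neither term individually lies in $\mathfrak{p}^{3e+1}$, and I expect this to be the main (and only) obstacle. Here the fact recalled just before the lemma applies directly: since $\mathfrak{p}^{3e}/\mathfrak{p}^{3e+1} \cong \mathbb{F}_2$, any two elements of $\mathfrak{p}^{3e} \setminus \mathfrak{p}^{3e+1}$ represent the same nonzero class and therefore sum into $\mathfrak{p}^{3e+1}$. Applying this to $4y^3 z$ and $6 y^2 z^2$ gives $v_\p(4y^3 z + 6 y^2 z^2) \geq 3e + 1$, and combining with the easy summands yields $v_\p(x^4 - y^4) \geq 3e + 1$, as required.
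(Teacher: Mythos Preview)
Your proof is correct, but it proceeds by a different decomposition than the paper's. The paper factors $x^4 - y^4 = (x-y)(x+y)(x^2+y^2)$ multiplicatively, observes that each factor lies in $\mathfrak{p}^e$ (so $\mathfrak{p}^{3e} \mid x^4 - y^4$ automatically), and then argues by contradiction: if all three factors had valuation exactly $e$, the $f=1$ fact applied to $x-y$ and $x+y$ would force $\mathfrak{p} \mid y$, which in turn pushes $x^2+y^2 = (x-y)(x+y) + 2y^2$ into $\mathfrak{p}^{e+2}$. Your additive binomial expansion with $z = x-y$ reaches the same conclusion by bounding $4y^3z$, $6y^2z^2$, $4yz^3$, $z^4$ term by term, invoking the $f=1$ cancellation only in the borderline case $v_\p(y)=0$, $v_\p(z)=e$. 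Both arguments are short and isolate the role of $f=1$ at a single step; the paper's factorization is perhaps slightly slicker in that it avoids case-splitting on $v_\p(y)$ and $v_\p(z)$, while your approach makes the dependence on these valuations fully explicit and in fact already goes through for $e \geq 1$. One small expository quibble: $v_\p(2)=e$ and $v_\p(3)=0$ hold for any $\mathfrak{p}\mid(2)$ regardless of $f$; the hypothesis $f=1$ is genuinely needed only where you actually use it, in the final cancellation of the two valuation-$3e$ terms.
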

\begin{proof} We have $x^4 - y^4 = (x-y)(x+y)(x^2+y^2)\equiv 0 \pmod{\mathfrak{p}^{3e}}$. This is because $\mathfrak{p}^{e} \mid x-y,x+y,x^2+y^2$. If $x^4-y^4 \not\equiv 0 \pmod{\mathfrak{p}^{3e+1}}$ we must have $\mathfrak{p}^{e} \parallel x-y,x+y,x^2+y^2$. This yields $\mathfrak{p}^{e+1} \mid (x+y)-(x-y)=2y$ or $\mathfrak{p} \mid y$. We now have $\mathfrak{p}^{e+2} \mid(x-y)(x+y) + 2y^2 = x^2 + y^2$, a contradiction. 
\end{proof}

The above lemma allows us to easily verify all the possible cases. For the rest of this section, we put $e=4$ and $f=1$. We can assume that the values of $x_i$ which are not divisible by $\mathfrak{p}$ lie in
\begin{equation}
    \{ 1, \  \tau + 1,\  \tau^2 + 1,\  \tau^3+1, \ \tau+\tau^2+1, \ \tau+\tau^3+1, \ \tau^2+\tau^3+1, \ \tau+\tau^2+\tau^3+1\},
\end{equation}
the values of $x_i$ for which $\mathfrak{p} \parallel x_i$ lie in
\begin{equation}
    \{ \tau, \  \tau + \tau^2, \  \tau+\tau^3, \ \tau+\tau^2+\tau^3\},
\end{equation}
the values of $x_i$ for which $\mathfrak{p}^2 \parallel x_i$ lie in
\begin{equation}
    \{ \tau^2, \  \tau^2+\tau^3\},
\end{equation}
and the values of $x_i$ for which $\mathfrak{p}^3 \parallel x_i$ lie in 
\begin{equation}
    \{\tau^3\}.
\end{equation}
We do not care about values of $x_i$ for which $\mathfrak{p}^4 \mid x_i$, because their fourth powers vanish modulo $\mathfrak{p}^{13}$. 

For the remainder of this section, our aim is to find the smallest number of fourth powers of the elements from the above sets, which we need to add up to get 0 modulo $\mathfrak{p}^{13}$. To accomplish that, we will eventually split into cases of positive integers $\alpha$ such that $\mathfrak{p}^{\alpha} \parallel \tau^4+2$. We know that $\mathfrak{p}^{4}$ divides both $\tau^4$ and $2$ and furthermore $\mathfrak{p}^{5}$ does not divide $\tau^4$, nor $2$. We conclude that $\mathfrak{p}^{5} \mid \tau^4 +2$ and hence $\alpha \geq 5$. Eventually, our aim is to consider all cases $\alpha =5,6,\dots, 13$. Some facts however, can be done for more cases at once.

It is important to note that a sum of fourth powers in which an odd number of fourth powers is not divisible by $\mathfrak{p}$, cannot be 0 modulo $\mathfrak{p}^{3e+1}$ (by $f=1$, a sum of two not divisible by $\mathfrak{p}$ elements is divisible by $\mathfrak{p}$, hence modulo $\mathfrak{p}$ a sum with an odd number of such elements is not 0). In order to use Hensel's lemma we only need to consider sums of fourth powers which have at least one fourth power not divisible by $\mathfrak{p}$. Let us define a \emph{good sum} to be a sum of fourth powers, in which there is an even and nonzero number of fourth powers not divisible by $\mathfrak{p}$.
 
\begin{lemma}\label{2.2} If $\mathfrak{p}^{5} \parallel \tau^4+2$ or $\mathfrak{p}^{7} \parallel \tau^4+2$ then $s > 2$. If $\mathfrak{p}^6 \not\parallel \tau^4+2$ then $s > 1$.
\end{lemma}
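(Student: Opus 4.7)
The plan is a direct case-by-case valuation computation. By the Hensel reduction and the good-sum observation from the preceding paragraphs, showing $s > 1$ reduces to verifying $u^4 + 1 \not\equiv 0 \pmod{\mathfrak{p}^{13}}$ for every unit $u \in \mathcal{O}_{K_\mathfrak{p}}^\times$, while showing $s > 2$ reduces to verifying $u^4 + v^4 + 1 \not\equiv 0 \pmod{\mathfrak{p}^{13}}$ for every unit $u$ and every $v \in \mathfrak{p}$; in both cases I have normalized a unit summand to $1$ by dividing the original sum through by its fourth power. Since $f = 1$ forces the residue field to be $\mathbb{F}_2$, every unit is of the form $u = 1 + y$ with $y \in \mathfrak{p}$, so I expand
$$u^4 + 1 = 2 + 4y + 6y^2 + 4y^3 + y^4$$
and exploit $v_\mathfrak{p}(2) = 4$, $v_\mathfrak{p}(4) = 8$, $v_\mathfrak{p}(6) = 4$, together with the defining relation $\tau^4 = -2 + \epsilon \tau^\alpha$ for a unit $\epsilon$, to rewrite $u^4 + 1$ as a $\tau$-polynomial whose valuation I can track term by term.

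The analysis splits on $v_\mathfrak{p}(y)$. If $v_\mathfrak{p}(y) \geq 2$, every term besides $2$ lies in $\mathfrak{p}^8$, so $u^4 + 1 \equiv 2 \pmod{\mathfrak{p}^8}$ and $v_\mathfrak{p}(u^4+1) = 4$. If $v_\mathfrak{p}(y) = 1$, I write $y = \tau u_1$ for a unit $u_1$ (necessarily $u_1 \equiv 1 \pmod{\mathfrak{p}}$) and substitute $y^4 = -2u_1^4 + \epsilon u_1^4 \tau^\alpha$ and $6y^2 = -3\tau^6 u_1^2 + 3\epsilon u_1^2 \tau^{\alpha+2}$ to obtain
$$u^4 + 1 = 2(1 - u_1^4) + \epsilon u_1^4 \tau^\alpha - 3\tau^6 u_1^2 + R,$$
where $R$ collects the remaining terms, all of valuation $\geq 7$, and $2(1 - u_1^4) \in \mathfrak{p}^8$ since $1 - u_1^4 \in \mathfrak{p}^4$. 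The two potentially leading terms have valuations $\alpha$ and $6$, and both $\epsilon u_1^4$ and $3u_1^2$ reduce to $1$ in $\mathbb{F}_2$, so their coefficients cancel modulo $\mathfrak{p}$ exactly when $\alpha = 6$. Consequently, for $\alpha \neq 6$ one has $v_\mathfrak{p}(u^4 + 1) \leq 6 < 13$, proving $s > 1$.

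For the stronger claim $s > 2$ at $\alpha \in \{5, 7\}$, I incorporate $v^4$ and further split on $v_\mathfrak{p}(v)$. When $v_\mathfrak{p}(v) = 1$, writing $v = \tau w$ yields $v^4 \equiv -2 + \epsilon \tau^\alpha \pmod{\mathfrak{p}^8}$; when $v_\mathfrak{p}(v) \geq 2$ (including $v = 0$), $v^4 \in \mathfrak{p}^8$. Examining the four combinations of $v_\mathfrak{p}(y), v_\mathfrak{p}(v) \in \{1, \geq 2\}$, the most delicate one is $v_\mathfrak{p}(y) \geq 2$ with $v_\mathfrak{p}(v) = 1$, where the leading $-2$ of $v^4$ cancels the $2$ of $u^4 + 1$, leaving $\epsilon \tau^\alpha$ of valuation exactly $\alpha$. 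In every other subcase some term at valuation $\leq 6$ survives uncancelled. Thus $v_\mathfrak{p}(u^4 + v^4 + 1) \leq \max(6, \alpha) \leq 7 < 13$, so $s > 2$.

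The main obstacle is the faithful bookkeeping of the substitutions for $2, 4, 6$ in terms of $\tau$ and $\epsilon$ across a proliferation of subcases; the decisive insight driving all three claims is the $\mathbb{F}_2$-cancellation $\epsilon u_1^4 \equiv 3 u_1^2 \equiv 1 \pmod{\mathfrak{p}}$, which isolates $\alpha = 6$ as the unique value for which additional cancellation at valuation $6$ can occur and thereby permit $s \leq 2$.
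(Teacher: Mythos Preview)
Your argument is correct and proceeds along a genuinely different path from the paper's. The paper works by brute-force enumeration: it lists the fourth powers of all $12$ representatives from the sets $(1)$ and $(2)$ modulo $\mathfrak{p}^{8}$, observes that only three residues $\{1,\ \tau^4,\ \tau^4+2\tau^2+1\}$ occur, and then inspects the four possible good sums of three fourth powers, namely $\{2,\ \tau^4+2,\ 2\tau^2+2,\ \tau^4+2\tau^2+2\}$, checking that none vanish modulo $\mathfrak{p}^8$ under the stated hypotheses on $\alpha$. You instead normalize a unit summand to $1$, expand $(1+y)^4+1$, and feed the single relation $\tau^4 = -2 + \epsilon\tau^\alpha$ through the expansion to track valuations; the pivotal observation that $\epsilon u_1^4$ and $-3u_1^2$ both reduce to $1$ in the residue field $\mathbb{F}_2$ isolates $\alpha=6$ as the only value at which cancellation at level $6$ can occur, and the rest is bookkeeping. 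Your route is more conceptual and scales better (it would adapt to other ramification indices without relisting representatives), while the paper's tabulation has the virtue of being mechanically checkable and reusable in the later lemmas, where the same residue lists are recycled. One small point worth making explicit in your write-up: in the subcase $v_\mathfrak{p}(y)=v_\mathfrak{p}(v)=1$, the term that survives at valuation $\le 6$ is the contribution $-2w^4$ coming from $v^4=\tau^4w^4=-2w^4+\epsilon w^4\tau^\alpha$, which has valuation exactly $4$ and meets no other valuation-$4$ term in your expression for $u^4+1$ (the $2$ there has already been absorbed into $2(1-u_1^4)\in\mathfrak{p}^8$); saying this outright would remove any doubt about that clause.
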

\begin{proof} We will first prove that $s\neq 2$ in the first two mentioned cases. To prove this, we will consider all possible fourth powers modulo $\mathfrak{p}^{8}$. A fourth power of an element divisible by $\mathfrak{p}^2$, will vanish modulo $\mathfrak{p}^8$. We will only check what are the fourth powers of elements from (1) and (2) modulo $\mathfrak{p}^{8}$:

\begin{center}
\begin{tabular}{@{}l@{}}
    $1^4 \equiv 1$,\\
    $(\tau+1)^4 \equiv  \tau^4 + 2\tau^2 +1$,\\

    $(\tau^2+1)^4 \equiv 1$,\\
 
    $(\tau^3+1)^4 \equiv 1$,\\
       
    $(\tau + \tau^2 + 1)^4 \equiv \tau^4 + 2\tau^2 + 1$,\\
    $(\tau + \tau^3 + 1)^4 \equiv \tau^4 + 2\tau^2 + 1$,\\
    $(\tau^2 + \tau^3 + 1)^4 \equiv 1$,\\
    $(\tau + \tau^2 + \tau^3 + 1)^4 \equiv  \tau^4 + 2\tau^2 + 1$,\\
    
    $\tau^4 \equiv \tau^4$,\\
    $(\tau + \tau^2)^4 \equiv \tau^4$,\\
    $(\tau + \tau^3)^4 \equiv \tau^4$,\\
    $(\tau+\tau^2+\tau^3)^4 \equiv \tau^4$.
\end{tabular}

\end{center}

Hence, all the possible nonzero remainders of fourth powers modulo $\mathfrak{p}^{8}$ are:
\[ \{1, \  \tau^4,\  \tau^4 + 2\tau^2 + 1\}. \]

Having this, we can see that sums of any two not divisible by $\mathfrak{p}$ remainders from the above set (either $1$ or $\tau^4 + 2\tau^2 + 1$) with one reminder divisible by $\mathfrak{p}$ (either 0 or $\tau^4$) modulo $\mathfrak{p}^8$ are 
\[ \{ 2, \ \tau^4+2, \ 2\tau^2 + 2, \ \tau^4 + 2\tau^2+2  \}\]

Now, provided that $\mathfrak{p}^5$ or $\mathfrak{p}^{7} \parallel \tau^4+2$ we can see that the above remainders are nonzero modulo $\mathfrak{p}^8$. This is because $\mathfrak{p}^4 \parallel 2$ and $\mathfrak{p}^6 \parallel 2\tau^2$. Hence $s > 2$ in those two cases. 

As for the case $\mathfrak{p}^6 \not\parallel \tau^4+2$, we see that the sum of two not divisible by $\mathfrak{p}$ fourth powers will be either 2 or $\tau^4 + 2\tau^2+2$ modulo $\mathfrak{p}^8$. Hence, in order to have $s=1$ we have to have $\tau^4+2 + 2\tau^2 \equiv 0 \pmod{\mathfrak{p}^8}$, but because $\mathfrak{p}^6 \parallel 2\tau^2$, we must also have $\mathfrak{p}^6 \parallel \tau^4 +2$, so the conclusion follows. 
\end{proof}

\begin{lemma}\label{2.3}
 If $\mathfrak{p}^{9} \parallel \tau^4+2$ or $\mathfrak{p}^{12} \parallel \tau^4+2$ then $s > 2$.  
 \end{lemma}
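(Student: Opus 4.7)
The plan is to parallel the proof of Lemma~\ref{2.2} but at the higher precision forced by the larger $\alpha$. Recall that a good sum of length $3$ has exactly two unit summands, so to prove $s > 2$ it suffices to show that no expression $u_1^4 + u_2^4 + y^4$ with $u_1,u_2$ units and $\mathfrak{p} \mid y$ can vanish modulo $\mathfrak{p}^{13}$. Since $\mathfrak{p}^4 \mid y$ makes $y^4$ vanish modulo $\mathfrak{p}^{13}$, only $y$ with $v_\mathfrak{p}(y) \in \{1,2,3\}$ need be considered, and by the preceding lemma the relevant $y$ may be taken from the short lists in $(2)$--$(4)$.

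The first step is to extend the fourth-power table from the proof of Lemma~\ref{2.2} to precision $\mathfrak{p}^{13}$. For a unit $u = 1 + v$ with $v \in \mathfrak{p}$, the expansion $u^4 = 1 + 4v + 6v^2 + 4v^3 + v^4$ has each summand of controlled valuation, using $v_\mathfrak{p}(2) = 4$ together with the fact that $v$ is an $\mathbb{F}_2$-combination of $\tau, \tau^2, \tau^3$. The corresponding expansions for representatives with $\mathfrak{p} \parallel x$, $\mathfrak{p}^2 \parallel x$, $\mathfrak{p}^3 \parallel x$ yield expressions of the form $\tau^4(\text{unit}^4)$, $\tau^8(\text{unit}^4)$, and $\tau^{12}(\text{unit}^4)$ respectively, each expanded out modulo~$\mathfrak{p}^{13}$.

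The crux is then to feed in the hypothesis $\mathfrak{p}^\alpha \parallel \tau^4 + 2$. For $\alpha \in \{9, 12\}$ one can write $\tau^4 \equiv -2 + c \tau^\alpha \pmod{\mathfrak{p}^{\alpha+1}}$ with $c$ a unit, and substitute this into the expanded unit fourth powers to normalize them. The pairwise sum $u_1^4 + u_2^4$ then collapses modulo $\mathfrak{p}^{13}$ to an expression whose surviving terms lie in a short list of monomials in $\tau$, whose valuations depend explicitly on $\alpha$ and on the $\mathbb{F}_2$-coefficients of $u_1, u_2$. Closing the argument amounts to enumerating this short list and, for each admissible $y$, verifying a valuation mismatch between $u_1^4 + u_2^4$ and $y^4$ modulo $\mathfrak{p}^{13}$. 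The two subcases $\alpha = 9$ and $\alpha = 12$ are handled in parallel but not simultaneously, since the substitution $\tau^4 \mapsto -2 + c\tau^\alpha$ places the correction term at different valuations.

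The chief obstacle is bookkeeping: at precision $\mathfrak{p}^{13}$ the cross-terms $4v$, $4v^3$ and the correction $4v^2 = 6v^2 - 2v^2$ that were invisible modulo $\mathfrak{p}^8$ in Lemma~\ref{2.2} now participate nontrivially, and the two subcases $\alpha = 9$ and $\alpha = 12$ have to be kept distinct because the leading error coming from $\tau^4 + 2$ falls in different precision ranges. Once the fourth-power tables modulo $\mathfrak{p}^{13}$ are in hand, however, the residual finite enumeration over pairs of units and admissible $y$ is mechanical and each candidate is ruled out by a valuation count.
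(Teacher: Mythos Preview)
Your plan is the same as the paper's: list the fourth powers of the representatives modulo $\mathfrak{p}^{13}$, form all good sums of length three (two unit fourth powers plus one non-unit), and rule each out. The substitution $\tau^4 = -2 + c\tau^\alpha$ is a reasonable organizing device, and the paper uses equivalent identities (e.g.\ $2\tau^6 \equiv -4\tau^2$, $\tau^8 \equiv 4$) throughout the later lemmas. So the approach is sound.

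The gap is that you stop at ``the residual finite enumeration is mechanical and each candidate is ruled out by a valuation count'' without performing any of it. In the paper that enumeration \emph{is} the proof: the $28$ pairwise sums of distinct unit fourth powers are listed, collapsed to ten distinct residues modulo $\mathfrak{p}^{13}$, grouped by the valuation of their lowest-order term, and each group is handled separately. Moreover, your phrase ``valuation mismatch between $u_1^4+u_2^4$ and $y^4$'' is not quite how the argument runs: in several cases $u_1^4+u_2^4$ and $y^4$ have the \emph{same} valuation (e.g.\ $u_1^4+u_2^4 \equiv \tau^8+2$ and $y^4=\tau^4$, both of valuation $4$), and the non-vanishing of $u_1^4+u_2^4+y^4$ is seen only after regrouping as $\tau^8 + (\tau^4+2)$ and invoking the hypothesis to compare $v_\mathfrak{p}(\tau^8)=8$ against $v_\mathfrak{p}(\tau^4+2)=\alpha$. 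The paper's conclusion for these groups is ``this can vanish only if $\alpha \in \{8,10\}$ (resp.\ $\{8,11\}$),'' which then excludes $\alpha \in \{9,12\}$. Your outline is correct, but the substance of the lemma is exactly the case-check you have left undone.
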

\begin{proof} By the previous lemma we already have $s>1$. To determine that $s \neq 2$ in those two cases, we need to consider every good sum of three fourth powers of remainders from (1), (2), (3) and (4). This time computation will be conducted modulo $\mathfrak{p}^{3e+1} = \mathfrak{p}^{13}$. Below we present a list of possible remainders modulo $\mathfrak{p}^{13}$ of fourth powers which will be of interest for us.
\begin{center}
\begin{tabular}{@{}l@{}}
    $1^4 \equiv 1$,\\
    $(\tau+1)^4 \equiv \tau^4 + 4\tau^3 + 6\tau^2 + 4\tau +1$,\\
    $(\tau^2+1)^4 \equiv \tau^8 + 6\tau^4 + 4\tau^2 + 1$,\\
    $(\tau^3+1)^4 \equiv \tau^{12} + 2\tau^6 + 4\tau^3 + 1$,\\
    $(\tau + \tau^2 + 1)^4 \equiv \tau^8 + 2\tau^6 + 3\tau^4 + 2\tau^2 + 4\tau + 1$,\\
    $(\tau + \tau^3 + 1)^4 \equiv \tau^{12} + 2\tau^6 + \tau^4 + 6\tau^2 + 4\tau +1$,\\
    $(\tau^2 + \tau^3 + 1)^4 \equiv \tau^8+2\tau^6+2\tau^4+4\tau^3+4\tau^2+1$,\\
    $(\tau + \tau^2 + \tau^3 + 1)^4 \equiv \tau^{12}+\tau^8+3\tau^4+4\tau^3+2\tau^2+4\tau+1$,\\
    $\tau^4 \equiv \tau^4$,\\
    $(\tau + \tau^2)^4 \equiv \tau^8 + 2\tau^6+ \tau^4$,\\
    $(\tau + \tau^3)^4 \equiv \tau^4$,\\
    $(\tau+\tau^2+\tau^3)^4 \equiv \tau^8+2\tau^6+\tau^4$, \\
    $(\tau^2)^4 \equiv \tau^8$,\\
    $(\tau^2 + \tau^3)^4 \equiv \tau^{12} +\tau^8 $,\\
    $(\tau^3)^4 \equiv \tau^{12}$.
\end{tabular}
\end{center}
Now we need to consider every remainder modulo $\mathfrak{p}^{13}$ which is given as a sum of three remainders from the above (not necessarily distinct) following the rule that two of the remainders are not divisible by $\mathfrak{p}$ and one is. First consider what happens if we add two same remainders not divisible by $\mathfrak{p}$:
\begin{center}
\begin{tabular}{@{}l@{}}
    $2 \cdot (1)^4 \equiv 2$,\\
    $2(\tau+1)^4 \equiv 2\tau^4+4\tau^2+2$,\\
    $2(\tau^2+1)^4 \equiv 2$,\\
    $2(\tau^3+1)^4 \equiv 2$,\\
    $2(\tau+\tau^2+1)^4 \equiv 2\tau^4+4\tau^2+2$,\\
    $2(\tau+\tau^3+1)^4 \equiv 2\tau^4+4\tau^2+2$,\\
    $2(\tau^2+\tau^3+1)^4 \equiv 2$,\\
    $2(\tau+\tau^2+\tau^3+1)^4 \equiv 2\tau^4 + 4\tau^2 + 2 $,
\end{tabular}
\end{center}
so the only remainders modulo $\mathfrak{p}^{13}$ are $\{ 2, \ 2\tau^4+4\tau^2+2 \}$. If to any of these elements, we add a fourth power divisible by $\mathfrak{p}$, to obtain 0 modulo $\mathfrak{p}^{13}$,
this fourth power must be in $\mathfrak{p}^4\setminus \mathfrak{p}^5$. This is because the two elements in the above set are in $\mathfrak{p}^4\setminus \mathfrak{p}^5$. Now, we are adding an element from the set $\{\tau^4,\tau^8+2\tau^6+\tau^4\}$ to an element from $\{ 2,2\tau^4+4\tau^2+2 \}$ and we obtain the following possibilities: 
\[\tau^4+2,\quad (\tau^4+1)(\tau^4+2),\quad  2\tau^4 + 4\tau^2 + (\tau^4+2),\quad  (\tau^4+2\tau^2+1)(\tau^4+2)\] 
modulo $\mathfrak{p}^{13}$. By assumption on $\tau^4+2$, none of them vanish modulo $\mathfrak{p}^{13}$. To make it clear, let us discuss the element $2\tau^4 + 4\tau^2 + (\tau^4+2)$. It is nonzero, since $\mathfrak{p}^8 \parallel 2\tau^4$ and both $4\tau^2$ and $\tau^4+2$ are divisible by $\mathfrak{p}^9$. Hence the element in question is in $\mathfrak{p}^8\setminus \mathfrak{p}^9$ and it does not vanish modulo $\mathfrak{p}^{13}$. This proves that we cannot get 0 modulo $\mathfrak{p}^{13}$ by a good sum of three fourth powers in which the two fourth powers not divisible by $\mathfrak{p}$ are the same. 

As a next step, let us consider sums of two distinct fourth powers not divisible by $\mathfrak{p}$ modulo $\mathfrak{p}^{13}$:
\begin{center}
\begin{tabular}{@{}l@{}}
    $1^4 + (\tau+1)^4 \equiv \tau^4 + 4\tau^3 + 6\tau^2 + 4\tau +2$,\\
    $1^4 + (\tau^2+1)^4 \equiv \tau^8 + 6\tau^4 + 4\tau^2 + 2$,\\
    $1^4 + (\tau^3+1)^4 \equiv \tau^{12} + 2\tau^6 + 4\tau^3 + 2$,\\
    $1^4 + (\tau+\tau^2+1)^4 \equiv \tau^8 + 2\tau^6 + 3\tau^4 + 2\tau^2 + 4\tau + 2$,\\
    $1^4 + (\tau+\tau^3+1)^4 \equiv \tau^{12} + 2\tau^6 + \tau^4 + 6\tau^2 + 4\tau +2$, \\
    $1^4 + (\tau^2+\tau^3+1)^4 \equiv \tau^8+2\tau^6+2\tau^4+4\tau^3+4\tau^2+2$,\\
    $1^4 + (\tau+\tau^2+\tau^3+1)^4 \equiv \tau^{12}+\tau^8+3\tau^4+4\tau^3+2\tau^2+4\tau+2$, \\
    $(\tau+1)^4 + (\tau^2+1)^4 \equiv \tau^8 + 7\tau^4 + 4\tau^3 + 2\tau^2 + 4\tau + 2$, \\
    $(\tau+1)^4 + (\tau^3+1)^4 \equiv \tau^{12} + 2\tau^6 + \tau^4 + 6\tau^2 + 4\tau + 2$, \\
    $(\tau+1)^4 + (\tau+\tau^2+1)^4 \equiv \tau^8 + 2\tau^6 + 4\tau^4 + 4\tau^3 + 2$, \\
    $(\tau+1)^4 + (\tau+\tau^3+1)^4 \equiv \tau^{12}+ 2\tau^6 + 2\tau^4 + 4\tau^3 + 4\tau^2 + 2$, \\
    $(\tau+1)^4 + (\tau^2+\tau^3+1)^4 \equiv \tau^8 + 2\tau^6 + 3\tau^4+2\tau^2+4\tau+2$, \\
    $(\tau+1)^4 + (\tau+\tau^2+\tau^3+1)^4 \equiv \tau^8 + 2$, \\
    $(\tau^2+1)^4 + (\tau^3+1)^4 \equiv \tau^{12}+ \tau^8 + 2\tau^6 + 6\tau^4 + 4\tau^3 + 4\tau^2 + 2$,\\
    $(\tau^2+1)^4 + (\tau+\tau^2+1)^4 \equiv 2\tau^8 + 2\tau^6 + \tau^4 + 6\tau^2 + 4\tau + 2$, \\
    $(\tau^2+1)^4 + (\tau+\tau^3+1)^4 \equiv \tau^{12}+\tau^8+2\tau^6+7\tau^4+2\tau^2+4\tau+2$, \\
    $(\tau^2+1)^4 + (\tau^2+\tau^3+1)^4 \equiv 2\tau^8 + 2\tau^6 + 4\tau^3 + 2$, \\
    $(\tau^2+1)^4 + (\tau+\tau^2+\tau^3+1)^4 \equiv \tau^4+4\tau^3+6\tau^2+4\tau+2$, \\
    $(\tau^3+1)^4 + (\tau+\tau^2+1)^4 \equiv \tau^{12}+\tau^8+3\tau^4+4\tau^3+2\tau^2+4\tau+2$, \\
    $(\tau^3+1)^4 + (\tau+\tau^3+1)^4 \equiv \tau^4+4\tau^3+6\tau^2+4\tau+2$, \\
    
    $(\tau^3+1)^4 + (\tau^2+\tau^3+1)^4 \equiv \tau^{12}+\tau^8+2\tau^4+4\tau^2+2$, \\

    $(\tau^3+1)^4 + (\tau+\tau^2+\tau^3+1)^4 \equiv \tau^8+2\tau^6+3\tau^4+2\tau^2+4\tau+2$, \\

    $(\tau+\tau^2+1)^4 + (\tau+\tau^3+1)^4 \equiv \tau^8+2$, \\
     
    $(\tau+\tau^2+1)^4 + (\tau^2+\tau^3+1)^4 \equiv \tau^4+4\tau^3+6\tau^2+4\tau+2$, \\
    $(\tau+\tau^2+1)^4 + (\tau+\tau^2+\tau^3+1)^4 \equiv \tau^{12}+2\tau^6+2\tau^4+4\tau^3+4\tau^2+2$, \\
         
    $(\tau+\tau^3+1)^4 + (\tau^2+\tau^3+1)^4 \equiv \tau^{12}+\tau^8+3\tau^4+4\tau^3+2\tau^2+4\tau+2$, \\
    $(\tau+\tau^3+1)^4 + (\tau+\tau^2+\tau^3+1)^4 \equiv \tau^{12}+\tau^8+2\tau^6+4\tau^3+2$, \\
    $(\tau^2+\tau^3+1)^4 + (\tau+\tau^2+\tau^3+1)^4 \equiv \tau^{12}+2\tau^6+\tau^4+6\tau^2+4\tau+2. $
\end{tabular}
\end{center}
After removing any duplicates we get the following remainders:
\begin{center}
\begin{tabular}{@{}l@{}}
    $\tau^4 + 4\tau^3 + 6\tau^2 + 4\tau +2$, \\
    $\tau^8 + 2\tau^6 + 3\tau^4 + 2\tau^2 + 4\tau + 2  $, \\
    $\tau^{12} + 2\tau^6 + \tau^4 + 6\tau^2 + 4\tau +2$, \\
    $\tau^{12}+\tau^8+3\tau^4+4\tau^3+2\tau^2+4\tau+2$, \\
    \\
    $\tau^{12} + 2\tau^6 + 4\tau^3 + 2$, \\
    $\tau^{12}+ 2\tau^6 + 2\tau^4 + 4\tau^3 + 4\tau^2 + 2$, \\
    $\tau^8 + 2$, \\
    $\tau^{12}+\tau^8+2\tau^4+4\tau^2+2$, \\
    \\
    $\tau^8+2\tau^6+2\tau^4+4\tau^3+4\tau^2+2$,  \\
    $\tau^{12}+\tau^8+2\tau^6+4\tau^3+2$.
\end{tabular}
\end{center}

Let us now add a fourth power divisible by $\mathfrak{p}$ to the above elements. First four remainders will not be 0 modulo $\mathfrak{p}^{13}$, because $\mathfrak{p}^6 \parallel 2\tau^2$ and any other summands which can possibly cancel out $2\tau^2$ in the combined sum of three fourth powers are $\tau^4$ and $2$, which are the only elements not divisible by $\mathfrak{p}^7$. However, when those two elements are added, they are divisible by $\mathfrak{p}^9$ by assumption. Consequently, the term $2\tau^2$ cannot be reduced in any way.

The next four remainders can vanish only if $\mathfrak{p}^{10} \parallel \tau^4+2$ or $\mathfrak{p}^{8} \parallel \tau^4+2$. To see this, note that they have an even number of $\tau^4$. To cancel out the 2, the fourth power divisible by $\mathfrak{p}$, which we are adding must then have an odd number of $\tau^4$. Hence it is either $\tau^4$ or $\tau^8+2\tau^6+\tau^4$. Now adding them and reducing the good sums of three fourth powers modulo $\mathfrak{p}^{11}$ gives the following: 
\[ 2\tau^6 + (\tau^4 +2), \  \tau^8 + (\tau^4 + 2), \ 2\tau^4 + (\tau^4+2), \ 2\tau^6 + (\tau^4+1)(\tau^4+2)\]
and they can be 0 only if $\mathfrak{p}^{10} \parallel \tau^4+2$ or $\mathfrak{p}^{8} \parallel \tau^4+2$. To simplify the above, we used the fact that $2\tau^6 + 4\tau^2 \equiv 0 \implies 2\tau^6 \equiv 4\tau^2 \pmod{\mathfrak{p}^{11}} $, which follows from $\mathfrak{p}^{10} \parallel 2\tau^6, 4\tau^2 $. 

The last two remainders can be 0 only if $\mathfrak{p}^{11} \parallel \tau^4+2$ or $\mathfrak{p}^{8} \parallel \tau^4+2$. Again, we get that the divisible by $\mathfrak{p}$ fourth power that we are adding must be either $\tau^4$ or $\tau^8+2\tau^6+\tau^4$. Hence we obtain following elements modulo $\mathfrak{p}^{12}$:
\[ 4\tau^3 + (\tau^4+2\tau^2+1)(\tau^4+2), \ 4\tau^3 + 4\tau^2 + 2\tau^4 + (\tau^4+2), \ 4\tau^3 + 2\tau^6 + \tau^8 + (\tau^4+2), \  4\tau^3 + (\tau^4+2),\]
so the result follows. 
\end{proof}
\begin{lemma}\label{2.4} If $\mathfrak{p}^7 \parallel \tau^4+2$ or $\mathfrak{p}^9 \parallel \tau^4+2$ then $s > 3$ as long as $\mathfrak{p}^{13} \nmid \tau^{12}+4\tau^3+2(\tau^4+2)$.
\end{lemma}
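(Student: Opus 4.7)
The plan is to extend the proof of Lemma~\ref{2.3} by one more fourth power. Lemmas~\ref{2.2} and \ref{2.3} already establish $s>2$ in both cases $\mathfrak{p}^{7}\parallel\tau^{4}+2$ and $\mathfrak{p}^{9}\parallel\tau^{4}+2$, so it suffices to rule out vanishing modulo $\mathfrak{p}^{13}$ of every good sum of four fourth powers. Such a sum contains either four units and no non-unit, or two units and two non-units; summands with $v_{\mathfrak{p}}(x_{i})\ge 4$ contribute $0$ modulo $\mathfrak{p}^{13}$ and can be discarded.

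The key step is a uniform reduction. Writing $\tau^{4}=-2+\mu$ with $v_{\mathfrak{p}}(\mu)=\alpha\ge 7$, a direct computation gives
\[
\tau^{8}\equiv 4,\qquad \tau^{12}\equiv -8,\qquad 2\tau^{6}\equiv -4\tau^{2}\pmod{\mathfrak{p}^{13}},
\]
since the error terms $-4\mu+\mu^{2}$, $12\mu-6\mu^{2}+\mu^{3}$ and $2\tau^{2}\mu$ all have valuation $\geq 13$ for $\alpha\in\{7,9\}$. Applying these identities to the tables of fourth powers and 2-unit sums from Lemma~\ref{2.3} puts every fourth power and every 2-unit sum into a compact normal form; in particular each non-unit fourth power collapses to one of $\tau^{4}$, $\tau^{4}-4\tau^{2}+4$, $4$, $-4$, $-8$ modulo $\mathfrak{p}^{13}$.

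For the four-unit case I would pair entries of the 2-unit sum table (together with the two repeated-summand entries $2$ and $2\tau^{4}+4\tau^{2}+2$) and inspect each pairwise sum modulo $\mathfrak{p}^{13}$. Most pairs leave a surviving low-valuation term (a $4\tau$ at valuation $9$, a $4\tau^{3}$ at valuation $11$, or a $6\tau^{2}$ at valuation $6$) that cannot be cancelled, so those sums are visibly nonzero. A small subset of pairs, for instance $\{(\tau+1)^{4},(\tau+\tau^{3}+1)^{4}\}$ combined with $\{1^{4},1^{4}\}$, yields exactly $\tau^{12}+2\tau^{4}+4\tau^{3}+4$ after the reductions $8\tau\equiv 0$, $12\tau^{2}\equiv 4\tau^{2}$ and $2\tau^{6}\equiv -4\tau^{2}$; this equals $\tau^{12}+4\tau^{3}+2(\tau^{4}+2)$, which is excluded by hypothesis.

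For the mixed two-unit-plus-two-non-unit case, the crucial observation is that every sum of two non-units is a polynomial in $\tau^{0},\tau^{2},\tau^{4}$ alone. Hence any 2-unit sum carrying a surviving $4\tau$ or $4\tau^{3}$ term cannot cancel against a 2-non-unit sum, and such terms have valuation strictly less than $13$. Only the four 2-unit sums lacking $\tau$- and $\tau^{3}$-components (namely $2$, $6$, and $2\tau^{4}+4\tau^{2}\pm 2$) remain to be checked; direct comparison shows their negatives do not occur among the thirteen 2-non-unit sums, whose $\tau^{4}$- and $\tau^{2}$-coefficients lie in $\{0,1,2\}$ and $\{0,-4\}$ respectively. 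The only real obstacle is the bookkeeping; after the reduction to normal forms the enumeration is short and mechanical, and the lemma follows.
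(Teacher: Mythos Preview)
Your overall strategy mirrors the paper's: enumerate all good sums of four fourth powers, split into the four-unit and two-unit-plus-two-non-unit configurations, and check each. Your reduction identities $\tau^{8}\equiv 4$, $\tau^{12}\equiv -8$, $2\tau^{6}\equiv -4\tau^{2}\pmod{\mathfrak{p}^{13}}$ are correct and give a slightly cleaner normal form than the paper uses; you also correctly isolate the single exceptional element $\tau^{12}+4\tau^{3}+2(\tau^{4}+2)$.

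The gap is in your nonvanishing argument for the mixed case (and, to a lesser extent, the four-unit case). You argue that because a two-non-unit sum is an integer polynomial in $1,\tau^{2},\tau^{4}$, a ``surviving $4\tau$ or $4\tau^{3}$ term'' in the two-unit part cannot be cancelled. But the integer-coefficient polynomial representation you are using is \emph{not} canonical in $\mathcal{O}_{K}/\mathfrak{p}^{13}$: since $\tau^{4}+2=\tau^{\alpha}v$ with $\alpha\in\{7,9\}$ odd, already $2=-\tau^{4}+\tau^{\alpha}v$ has contributions at odd positions in the $\mathbb{F}_{2}$-basis $\{1,\tau,\dots,\tau^{12}\}$, and hence so does every even integer and every element of $\mathbb{Z}[\tau^{2}]$. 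Concretely, $2\tau^{2}+\tau^{6}=\tau^{2}(\tau^{4}+2)=\tau^{\alpha+2}v$ lies in your ``even'' subring yet lives at an odd position; for $\alpha=9$ this is exactly $\tau^{11}v\equiv 4\tau^{3}v$. So the presence of a $4\tau^{3}$ term in your normal form does not by itself rule out cancellation. Likewise, comparing the $\tau^{4}$- and $\tau^{2}$-coefficients of the four remaining two-unit sums against the two-non-unit list is a comparison of formal polynomials, not of residues modulo $\mathfrak{p}^{13}$.

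The paper closes this gap with valuation arguments rather than parity: for $S_{u}\in A$ one has $v_{\mathfrak{p}}(S_{u})=6$ while every two-non-unit sum has valuation in $\{4,8,12,\infty\}$, so no cancellation is possible; for $S_{u}\in B$ the paper reduces modulo $\mathfrak{p}^{10}$, where the $B$-elements collapse to $\{2,\,2\tau^{4}+2,\,\tau^{8}+2\}$ and the relevant $C$-elements to $\{\tau^{4},\,\tau^{4}+\tau^{8}\}$, and then checks directly that the four resulting sums $\tau^{4}+2$, $\tau^{8}+(\tau^{4}+2)$, $2\tau^{4}+(\tau^{4}+2)$, $(\tau^{4}+1)(\tau^{4}+2)$ are nonzero modulo $\mathfrak{p}^{10}$ for $\alpha\in\{7,9\}$. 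Your enumeration can be completed along the same lines, but the parity heuristic should be replaced by such valuation checks.
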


\begin{proof} By the previous lemmas, we already have $s>2$. To prove that $s > 3$ we need to consider all good sums of four fourth powers. Before we start the proof, let us make some reductions. Note that $2\tau^6 + 4\tau^2 \equiv 2\tau^2(\tau^4+2) \equiv 0 \pmod{\mathfrak{p}^{13}}$ and $0 \equiv (\tau^4+2)^2 \equiv \tau^8 + 4\tau^4+4 \pmod{\mathfrak{p}^{13}} \implies \tau^8+4 \equiv 4\tau^{4} \equiv\tau^{12}\pmod{\mathfrak{p}^{13}}$. Recall  now all the possible remainders given by a sum of two not divisible by $\mathfrak{p}$ fourth powers: 

$$
\left.
    \begin{array}{ll}
        \tau^4 + 4\tau^3 + 6\tau^2 + 4\tau +2, \\
        \tau^8 + 2\tau^6 + 3\tau^4 + 2\tau^2 + 4\tau + 2  , \\
        \tau^{12} + 2\tau^6 + \tau^4 + 6\tau^2 + 4\tau +2, \\
        \tau^{12}+\tau^8+3\tau^4+4\tau^3+2\tau^2+4\tau+2, 
    \end{array}
\right \}A 
$$
$$
\left.
    \begin{array}{ll}
        2,\\
        2\tau^4+4\tau^2+2,\\
        \tau^{12} + 2\tau^6 + 4\tau^3 + 2, \\
        \tau^{12}+ 2\tau^6 + 2\tau^4 + 4\tau^3 + 4\tau^2 + 2, \quad \ \   \\
        \tau^8 + 2, \\
        \tau^{12}+\tau^8+2\tau^4+4\tau^2+2, \\
        \tau^8+2\tau^6+2\tau^4+4\tau^3+4\tau^2+2,  \\
        \tau^{12}+\tau^8+2\tau^6+4\tau^3+2. 
    \end{array}
\right \}B
$$

They are split into two parts. Part A contains the remainders which have an odd number of $\tau^4$ and not divisible by 4 number of $\tau^2$. Part B has remainders with an even number of $\tau^4$ and divisible by 4 number of $\tau^2$. By assumption, we have $\mathfrak{p}^7 \mid \tau^4+2$, hence all the elements of $A$ are in $\mathfrak{p}^6\setminus \mathfrak{p}^7$ (because $\mathfrak{p}^6 \parallel 2\tau^2$) and all the elements of B are in $\mathfrak{p}^4\setminus \mathfrak{p}^5$ (because $\mathfrak{p}^4 \parallel 2$).

Below we list every possible remainder modulo $\mathfrak{p}^{13}$, which is achieved by summing two fourth powers divisible by $\mathfrak{p}$ (not necessarily distinct):
$$
\left.
    \begin{array}{ll}
        \tau^8, \quad 2\tau^4, \quad \tau^{12}+2\tau^4, \quad \tau^{12}, \quad \tau^{12}+\tau^8, \quad  \tau^8 + 2\tau^6 + 2\tau^4, \quad \tau^{12}+\tau^8+\tau^4, \\ 
     \tau^8 + \tau^4, \quad \tau^{12}+\tau^4, \quad \tau^{12}+2\tau^6 + \tau^4, \quad 2\tau^6 + \tau^4, \quad \tau^{12}+\tau^8+2\tau^6+\tau^4. 
    \end{array}
\right \}C
$$

All the above elements are congruent to $0$ or $\tau^4$ modulo $\mathfrak{p}^8$. Hence, after adding them to any of the remainders from $A$, we will get an element which is in either $\mathfrak{p}^6\setminus \mathfrak{p}^7$ or $\mathfrak{p}^4\setminus \mathfrak{p}^5$.

Let us now examine the set $B$. Because those remainders are in $\mathfrak{p}^4 \setminus \mathfrak{p}^5$, the remainder from $C$ which we are adding have to be in $\mathfrak{p}^4\setminus \mathfrak{p}^5$, for the total good sum to be congruent to 0 modulo $\mathfrak{p}^{13}$. This leaves us with six elements from $C$ -- those which have an odd number of $\tau^4$.

All the remainders from $B$, considered modulo $\mathfrak{p}^{10}$ are in $\{2, \ 2\tau^4+2,\ \tau^8+2\}$. We used here the fact that $\mathfrak{p}^{10} \mid \tau^8+2\tau^4 = \tau^4(\tau^4+2)$. Similarly, remainders from $C$ with an odd number of $\tau^4$ are congruent to either $\tau^4$ or $\tau^4+\tau^8$ modulo $\mathfrak{p}^{10}$. Hence, their sum modulo $\mathfrak{p}^{10}$ is in $\{\tau^4+2, \ \tau^8 + (\tau^4+2),\ 2\tau^4+(\tau^4+2),\ (\tau^4+1)(\tau^4+2) \}$ and none of them is congruent to 0 modulo $\mathfrak{p}^{10}$ under the assumptions of lemma. 

To finish the proof, it is enough to check sums of four not divisible by $\mathfrak{p}$ fourth powers modulo $\mathfrak{p}^{13}$. It is helpful to view such sum as a sum of two good sums of two fourth powers, as we know what are the possible remainders of good sums of two fourth powers. Let us see what happens when we add two same sums of two fourth powers. For this we will multiply by 2 the elements from the sets $A$ and $B$. By doing this we see that the set of possible remainders is $\{4, \  \tau^{12}+4,\  2\tau^4+4\tau^2+4\}$ and none of those remainders are congruent to 0 modulo $\mathfrak{p}^{13}$ (for the last one note that $\mathfrak{p}^{10} \not\parallel 2(\tau^4+2)$ by the assumption).

If we add an element from $A$ to an element from $B$, the resulting element will not be congruent to zero modulo  $\mathfrak{p}^{13}$. Elements from $A$ are in $\mathfrak{p}^6\setminus \mathfrak{p}^7$ and elements from $B$ are in $\mathfrak{p}^4\setminus \mathfrak{p}^5$, hence their sum is in $\mathfrak{p}^4\setminus \mathfrak{p}^5$.

Consider now sums of two different elements from $A$: 
\begin{center}
\begin{tabular}{@{}l@{}}
    $(\tau^4 + 4\tau^3 + 6\tau^2 + 4\tau +2) + (\tau^8 + 2\tau^6 + 3\tau^4 + 2\tau^2 + 4\tau + 2) \equiv  \tau^{12}+\tau^8+2\tau^6 + 4\tau^3+4$,\\
    $(\tau^4 + 4\tau^3 + 6\tau^2 + 4\tau +2) + (\tau^{12} + 2\tau^6 + \tau^4 + 6\tau^2 + 4\tau +2) \equiv \tau^{12} +2\tau^6 + 2\tau^4 + 4\tau^3+4\tau^2+4 $,\\
    $(\tau^4 + 4\tau^3 + 6\tau^2 + 4\tau +2) + (\tau^{12}+\tau^8+3\tau^4+4\tau^3+2\tau^2+4\tau+2) \equiv \tau^{8}+4 $,\\
    $(\tau^8 + 2\tau^6 + 3\tau^4 + 2\tau^2 + 4\tau + 2) + (\tau^{12} + 2\tau^6 + \tau^4 + 6\tau^2 + 4\tau +2) \equiv \tau^{8}+4 $,\\
    $(\tau^8 + 2\tau^6 + 3\tau^4 + 2\tau^2 + 4\tau + 2) + (\tau^{12}+\tau^8+3\tau^4+4\tau^3+2\tau^2+4\tau+2) \equiv \tau^{12} +2\tau^6 + 2\tau^4+4\tau^3+4\tau^2+4 $,\\
    $(\tau^{12} + 2\tau^6 + \tau^4 + 6\tau^2 + 4\tau +2) + (\tau^{12}+\tau^8+3\tau^4+4\tau^3+2\tau^2+4\tau+2) \equiv \tau^{12}+\tau^8+2\tau^6 + 4\tau^3+4 $.
\end{tabular}
\end{center}
After removing duplicates and using properties proved at the beginning of the proof to simplify expressions, we are left with remainders: $2\tau^6+4\tau^3,\  \tau^{12}+2\tau^4+4\tau^3+4, \ \tau^{12}.$
The elements $2\tau^6 + 4\tau^3$ and $\tau^{12}$ are not congruent to 0 modulo $\mathfrak{p}^{13}$, and the remainder $\tau^{12} + 2\tau^4+4\tau^3+4$ is assumed not to vanish modulo $\mathfrak{p}^{13}$. The last element can actually vanish sometimes, namely, it is zero modulo $\mathfrak{p}^{13}$ iff $\mathfrak{p}^{7} \parallel \tau^4+2$ and $\mathfrak{p}^{12} \parallel 4\tau^3 + 2(\tau^4+2)$. We are left with sums of two different elements from $B$:
\begin{center}
\begin{tabular}{@{}l@{}}
    $2 + (2\tau^4+4\tau^2+2) \equiv 2\tau^4+4\tau^2+4$\\
    $2 + (\tau^{12}+2\tau^6+4\tau^3+2) \equiv \tau^{12} + 2\tau^6 + 4\tau^3 + 4$, \\
    $2 + (\tau^{12}+ 2\tau^6 + 2\tau^4 + 4\tau^3 + 4\tau^2 + 2) \equiv \tau^{12} + 2\tau^6 +2\tau^4 + 4\tau^3  +4\tau^2+ 4$, \\
    $2 + (\tau^8 + 2) \equiv \tau^{8}+4$, \\
    $2 + (\tau^{12}+\tau^8+2\tau^4+4\tau^2+2) \equiv \tau^{12}+\tau^8+2\tau^4+4\tau^2+4$, \\
    $2 + (\tau^8+2\tau^6+2\tau^4+4\tau^3+4\tau^2+2) \equiv \tau^{8}+2\tau^6+2\tau^4+4\tau^3+4\tau^2+4$,  \\
    $2 + (\tau^{12}+\tau^8+2\tau^6+4\tau^3+2) \equiv \tau^{12}+\tau^8+2\tau^6+4\tau^3+4$,\\
    $(2\tau^4+4\tau^2+2) + (\tau^{12}+2\tau^6+4\tau^3+2) \equiv \tau^{12} + 2\tau^6+2\tau^4+ 4\tau^3 +4\tau^2+ 4$, \\
    $(2\tau^4+4\tau^2+2) + (\tau^{12}+ 2\tau^6 + 2\tau^4 + 4\tau^3 + 4\tau^2 + 2) \equiv 2\tau^6 + 4\tau^3 + 4$, \\
    $(2\tau^4+4\tau^2+2) + (\tau^8 + 2) \equiv \tau^{8} + 2\tau^4 + 4\tau^2+4 $, \\
    $(2\tau^4+4\tau^2+2) + (\tau^{12}+\tau^8+2\tau^4+4\tau^2+2) \equiv \tau^{8}+4$, \\
    $(2\tau^4+4\tau^2+2) + (\tau^8+2\tau^6+2\tau^4+4\tau^3+4\tau^2+2) \equiv \tau^{12}+\tau^8+2\tau^6+4\tau^3+4$,  \\
    $(2\tau^4+4\tau^2+2) + (\tau^{12}+\tau^8+2\tau^6+4\tau^3+2) \equiv \tau^{12}+\tau^8+2\tau^6+2\tau^4+4\tau^3+4\tau^2+4$,\\ 
    $(\tau^{12}+2\tau^6+4\tau^3+2) + (\tau^{12}+ 2\tau^6 + 2\tau^4 + 4\tau^3 + 4\tau^2 + 2) \equiv 2\tau^4+4\tau^2+4$,\\
    $(\tau^{12}+2\tau^6+4\tau^3+2) + (\tau^8 + 2) \equiv \tau^{12}+\tau^8+2\tau^6+4\tau^3+4$,\\
    $(\tau^{12}+2\tau^6+4\tau^3+2) + (\tau^{12}+\tau^8+2\tau^4+4\tau^2+2) \equiv \tau^{8}+2\tau^6+2\tau^4+4\tau^3+4\tau^2+4$,\\
    $(\tau^{12}+2\tau^6+4\tau^3+2) + (\tau^8+2\tau^6+2\tau^4+4\tau^3+4\tau^2+2) \equiv \tau^{12}+\tau^8+2\tau^4+4\tau^2+4$,\\
    $(\tau^{12}+2\tau^6+4\tau^3+2) + (\tau^{12}+\tau^8+2\tau^6+4\tau^3+2) \equiv \tau^{8}+4$,\\
    $(\tau^{12}+ 2\tau^6 + 2\tau^4 + 4\tau^3 + 4\tau^2 + 2) + (\tau^8 + 2) \equiv  \tau^{12}+\tau^8+2\tau^6+2\tau^4 + 4\tau^3+4\tau^2+4 $,\\
    $(\tau^{12}+ 2\tau^6 + 2\tau^4 + 4\tau^3 + 4\tau^2 + 2) + (\tau^{12}+\tau^8+2\tau^4+4\tau^2+2) \equiv \tau^{12}+\tau^8+ 2\tau^6+4\tau^3+4$,\\
    $(\tau^{12}+ 2\tau^6 + 2\tau^4 + 4\tau^3 + 4\tau^2 + 2) + (\tau^8+2\tau^6+2\tau^4+4\tau^3+4\tau^2+2) \equiv \tau^{8}+4$,\\
    $(\tau^{12}+ 2\tau^6 + 2\tau^4 + 4\tau^3 + 4\tau^2 + 2) + (\tau^{12}+\tau^8+2\tau^6+4\tau^3+2) \equiv \tau^{8}+2\tau^4+4\tau^2+4$,\\
    $(\tau^8 + 2) + (\tau^{12}+\tau^8+2\tau^4+4\tau^2+2) \equiv 2\tau^4+4\tau^2+4$, \\
    $(\tau^8 + 2) + (\tau^8+2\tau^6+2\tau^4+4\tau^3+4\tau^2+2) \equiv \tau^{12}+2\tau^6+2\tau^4+4\tau^3+4\tau^2+4$,  \\
    $(\tau^8 + 2) + (\tau^{12}+\tau^8+2\tau^6+4\tau^3+2) \equiv 2\tau^6+4\tau^3+4$,\\
    $(\tau^{12}+\tau^8+2\tau^4+4\tau^2+2) + (\tau^8+2\tau^6+2\tau^4+4\tau^3+4\tau^2+2) \equiv \tau^{12}+2\tau^6+4\tau^3+4$,\\
    $(\tau^{12}+\tau^8+2\tau^4+4\tau^2+2) + (\tau^{12}+\tau^8+2\tau^6+4\tau^3+2) \equiv \tau^{12}+2\tau^6+2\tau^4+4\tau^3+4\tau^2+4$,\\
    $(\tau^8+2\tau^6+2\tau^4+4\tau^3+4\tau^2+2) + (\tau^{12}+\tau^8+2\tau^6+4\tau^3+2) \equiv 2\tau^4+4\tau^2+4$.
    
\end{tabular}
\end{center}
After removing any duplicates and simplifying as before, we are left with following elements:

\begin{center}
\begin{tabular}{@{}l@{}}
    $2\tau^4+4\tau^2+4, \quad \tau^{12}+2\tau^6+4\tau^3+4, \quad 
    \tau^{12}+2\tau^4+4\tau^3+4, \quad \tau^{12}, \quad 2\tau^4+4\tau^2,$\\
    $\tau^{12}+2\tau^4+4\tau^3, \quad 2\tau^6+4\tau^3, \quad
    2\tau^6+4\tau^3+4, \quad \tau^{12}+2\tau^4+4\tau^2, \quad 2\tau^4+4\tau^3.$
\end{tabular}
\end{center}
We see that the only remainder which can vanish modulo $\mathfrak{p}^{13}$ is the already mentioned element $\tau^{12} + 4\tau^3+2(\tau^4+2)$. This finishes the proof.
\end{proof}

We may now state the main result of this section.
\begin{thm}\label{main}
Let $K$ be a number field, with a prime ideal $\p \mid (2)$ of $\OO_k$ with $e(\p) = 4, \ f(\p)=1$. Let $\tau$ be any element in $\mathfrak{p}\setminus \mathfrak{p}^2$. Then $s = s_4(K_{\mathfrak{p}})$ is one of $\{1,2,3,4,5,6\}$, specifically
\end{thm}

\noindent
$ \text{if } \mathfrak{p}^5 \parallel \tau^4+2 \text{ then } s = \begin{cases}
    \textbf{3} \text{ iff } \mathfrak{p}^{13} \mid \tau^{12}+\tau^8+2\tau^6+4\tau^3+4\\
    \textbf{4} \text{ iff } \mathfrak{p}^{13} \mid \tau^8+2\tau^6+4\tau^3+4\\
    \textbf{5} \text{ iff } \mathfrak{p}^{13} \mid  \tau^8+4\tau^3+4\tau^2+4\\
    \textbf{6} \text{ iff } \mathfrak{p}^{13} \mid  \tau^{12}+\tau^8+4\tau^3+4\tau^2+4
\end{cases} $

\noindent
$\text{if } \mathfrak{p}^6 \parallel \tau^4+2 \text{ then } s = \begin{cases}
    \textbf{1} \text{ iff at least one of } \{A_1,A_2,A_3,A_4\} \text{ is divisible by } \mathfrak{p}^{13} \\
    \textbf{2} \text{ iff at least one of } \{A_1,A_2,A_3,A_4\}  \text{ is in } \mathfrak{p}^{12}\setminus\p^{13} \\
    \text{or at least one of } \{A_1+\tau^8, \ A_2+\tau^8, \ A_3+\tau^8, \ A_4+\tau^8\} \text{ is divisible by } \mathfrak{p}^{12} \\
    \textbf{3} \text{ otherwise}
\end{cases}$
\begin{center}
where:
\begin{tabular}{@{}l@{}}
    $A_1 = 4\tau^3+4\tau^2+4\tau+(\tau^4+2\tau^2+2),$\\
    $A_2 = 4\tau + (\tau^4+1)(\tau^4+2\tau^2+2),$\\
    $A_3 = 4\tau + (\tau^4+2\tau^2+2), $\\
    $A_4= 4\tau^3+4\tau^2+4\tau+(\tau^4+1)(\tau^4+2\tau^2+2). $
\end{tabular}
\end{center}
$\text{if } \mathfrak{p}^7 \parallel \tau^4+2 \text{ then } s = \begin{cases}
    \textbf{3} \text{ iff } \mathfrak{p}^{13} \mid \tau^{12}+4\tau^3+2(\tau^4+2)\\
    \textbf{4} \text{ otherwise}
\end{cases}$ 
\\
$\text{if } \mathfrak{p}^8 \parallel \tau^4+2 \text{ then } s = \begin{cases}
    \textbf{2} \text{ iff at least one of } \{ (\tau^8+\tau^4+2), \  4\tau^2+ (\tau^8+\tau^4+2), \\
    \tau^{12}+4\tau^3+(\tau^8+\tau^4+2), \ \tau^{12}+4\tau^3+4\tau^2+(\tau^8+\tau^4+2)\} \text{ is divisible by } \mathfrak{p}^{13}\\
    \textbf{3} \text{ iff at least one of } \{ \tau^{12}+(\tau^8+\tau^4+2), \ \tau^{12} + 4\tau^2+ (\tau^8+\tau^4+2), \\
    4\tau^3+(\tau^8+\tau^4+2), \ 4\tau^3+4\tau^2+(\tau^8+\tau^4+2)\} \text{ is divisible by } \mathfrak{p}^{13}\\
    \textbf{4} \text{ iff } \mathfrak{p}^9 \parallel \tau^8+\tau^4+2
\end{cases}$
\\
$\text{if } \mathfrak{p}^9 \parallel \tau^4+2 \text{ then } s = \textbf{4}$
\\
$\text{if } \mathfrak{p}^{10} \parallel \tau^4+2 \text{ then } s = \begin{cases}
    \textbf{2} \text{ iff } \mathfrak{p}^{13} \mid \tau^{12}+4\tau^3+4\tau^2+\tau^4+2 \text{ or } \tau^{12}+4\tau^2+\tau^4+2 \\
    \textbf{3} \text{ otherwise}
\end{cases}$
\\
$\text{if } \mathfrak{p}^{11} \parallel \tau^4+2 \text{ then } s = \begin{cases}
    \textbf{2} \text{ iff } \mathfrak{p}^{13} \mid 4\tau^3+\tau^4+2  \\
    \textbf{3} \text{ otherwise}
\end{cases}$
\\
$\text{if } \mathfrak{p}^{12} \parallel \tau^4+2 \text{ then } s = \textbf{3}$
\\
$\text{if } \mathfrak{p}^{13} \mid \tau^4+2 \text{ then } s = \textbf{2}.$

\begin{remark}
    For any number field with a prime dividing $(2)$ and with $e=4, \ f=1$ exactly one of the above cases (and subcases) will be true and will give the value of $s$, however we do not know if all the cases listed above will exist for some number field. We also emphasize that the above theorem does not depend on the choice of $\tau$.
\end{remark}

\begin{proof}
Proof will be a case by case analysis.

\textbf{Case \ref{cef}.5.} $\mathfrak{p}^5 \parallel \tau^4+2$.

By Lemma \ref{2.2} we know that $s \geq 3$. We have previously mentioned that a sum of fourth powers must have an even number of not divisible by $\mathfrak{p}$ fourth powers to be congruent to 0 modulo $\mathfrak{p}^{13}$. If the number of fourth powers not divisible by $\mathfrak{p}$ is even and not divisible by 4, then such sum is equal either to $2$ or $\tau^4+2$ modulo $\mathfrak{p}^6$. Both of those elements are nonzero modulo $\mathfrak{p}^6$, hence cannot vanish modulo $\mathfrak{p}^{13}$. As a consequence, we will consider good sums with the number of not divisible by $\mathfrak{p}$ summands equal to 4.

 In the previous lemma we have computed all possible remainders of such sums modulo $\mathfrak{p}^{13}$, except for sums of the form $A+B$. Any element of the aforementioned sum will have an element $2\tau^2 \in \mathfrak{p}^6\setminus \mathfrak{p}^7 $. The other terms which are not divisible by $\mathfrak{p}^7$
 are $\tau^4$ and $2$, but $ \mathfrak{p}^5 \parallel \tau^4+2$. Hence, such sum cannot vanish modulo $\mathfrak{p}^{13}$.
 
Let us recall all possible sums of four fourth powers modulo $\p^{13}$ of elements not divisible by $\p$, except for elements of the form $A+B$ of interest to us:
$$
\left.
    \begin{array}{ll}
        4,\\
        \tau^{12}+4,\\
        2\tau^4+4\tau^2+4,\\
        \tau^{12} + 2\tau^6 + 4\tau^3 + 4, \\
        \tau^{12}+ 2\tau^6 + 2\tau^4 + 4\tau^3 + 4\tau^2 + 4, \\
        \tau^8 + 4, \\
        \tau^{12}+\tau^8+2\tau^4+4\tau^2+4, \\
        \tau^8+2\tau^6+2\tau^4+4\tau^3+4\tau^2+4,  \\
        \tau^{12}+\tau^8+2\tau^6+4\tau^3+4,\\
        2\tau^6 + 4\tau^3 + 4, \\
        \tau^8 + 2\tau^4 + 4\tau^2 + 4, \\
        \tau^{12}+\tau^8+2\tau^6+2\tau^4+4\tau^3+4\tau^2+4  
    \end{array}
\right \}D
$$

We have $\mathfrak{p}^9 \parallel 2(\tau^4+2) = 2\tau^4+4$ and $\mathfrak{p}^{10} \parallel (\tau^4+2)^2 = \tau^8 + 4\tau^4 + 4 \implies \mathfrak{p}^{10} \parallel \tau^8+4 $. Simple calculations shows that the only element from the above list which can be congruent to 0 modulo $\mathfrak{p}^{13}$ is $\tau^{12}+\tau^8+2\tau^6+4\tau^3+4$. Hence, we have $s=3$ if and only if $\mathfrak{p}^{13} \mid \tau^{12}+\tau^8+2\tau^6+4\tau^3+4 $.

As a next step, we would like to focus on the situation $s>3$. Let us begin with good sums with 5 summands where we add one fourth powers divisible by $\mathfrak{p}$ to the four summands of the form $A+B$. By a similar discussion as before, no such sum can vanish modulo $\mathfrak{p}^{13}$.

 Consider now adding a single fourth power divisible by $\p$ to the elements of the set $D$. If this fourth power has an odd number of $\tau^4$ then such element cannot vanish modulo $\p^{13}$. This is shown by a simple calculation. Hence, we are left with the following set of fourth powers $\{\tau^8, \tau^{12}+\tau^8, \tau^{12}\}$. The only element of $D$ which can be divisible by $\p^{12}$ is $\tau^{12}+\tau^8+2\tau^6+4\tau^3+4$. Hence, if $\mathfrak{p}^{12} \parallel \tau^{12}+\tau^8+2\tau^6+4\tau^3+4$ then after adding $\tau^{12}$ we indeed obtain sum of 5 fourth powers which vanish modulo $\p^{13}$. Note that the above is equivalent to $\mathfrak{p}^{13} \mid \tau^8+2\tau^6+4\tau^3+4$.

 If we choose to add $\tau^8$ or $ \tau^{12}+\tau^8$ then the possible candidates from $D$ have to have and odd number of summands from $\p^8\setminus \p^9$. If such element has 3 summands in $\p^8\setminus \p^9$, namely $\tau^8, \ 2\tau^4, \ 4$ , then after adding $\tau^8$ we have $2\tau^8 \equiv \tau^{12} \pmod{\mathfrak{p}^{13}}$ and $\mathfrak{p}^9 \parallel 2\tau^4+4$. But now, such sum does not posses other elements belonging to $\p^9\setminus \p^{10}$, hence is nonzero modulo $\p^{10}$.

In order to check all possibilities, we need to add $\tau^8$ or $ \tau^{12}+\tau^8$ to element from $D$ with only one summand in $\p^8\setminus \p^9$. There four such elements, $4,\  \tau^{12}+4,\ \tau^{12}+2\tau^6+4\tau^3+4,\  2\tau^6+4\tau^3+4$. Because $\mathfrak{p}^{10} \parallel \tau^8+4$ we are left with $\tau^{12}+2\tau^6+4\tau^3+4$ and $2\tau^6+4\tau^3+4$. After adding $\tau^8$ or $\tau^{12}+\tau^8$  to them we get either $\tau^{12}+\tau^8+2\tau^6+4\tau^3+4$ or $\tau^8+2\tau^6+4\tau^3+4$. If the remainder $\tau^{12}+\tau^8+2\tau^6+4\tau^3+4$ is congruent to 0 modulo $\mathfrak{p}^{13}$, then $s=3$, by the previous discussion. Hence, $s=4$ if and only if $\mathfrak{p}^{13} \mid \tau^8+2\tau^6+4\tau^3+4 $.

To determine when $s=5$, we will add sums of two fourth powers divisible by $\p$ to elements from $D$. Similarly as before, we only need to consider elements with even number of $\tau^4$. We get the following
\[\tau^8,    \quad \tau^{12}+\tau^8, \quad \tau^{12}, \quad 2\tau^4, \quad \tau^{12}+2\tau^4, \quad \tau^8+2\tau^6+2\tau^4.\]
First three elements can be achieved by adding a single fourth powers, hence we omit them. Straightforward calculation shows that modulo $\p^{13}$ we able to obtain following elements
\[ \tau^{12}+\tau^8+4\tau^2+4, \quad \tau^8+4\tau^3+4\tau^2+4, \quad \tau^8+4\tau^2+4.\]

If $s>4$ then $\mathfrak{p}^{13} \nmid \tau^{12}+\tau^8+2\tau^6+4\tau^3+4 $ and $\mathfrak{p}^{13} \nmid \tau^8+2\tau^6+4\tau^3+4$ which implies $\mathfrak{p}^{12} \nmid \tau^8+2\tau^6+4\tau^3+4 $. Because $\mathfrak{p}^{10} \parallel \tau^8+4$ for the former to be true we have to have $\mathfrak{p}^{12} \mid \tau^8+2\tau^6+4$. Since $\mathfrak{p}^{11} \parallel 2\tau^2(\tau^4+2)  = 2\tau^6+4\tau^2$ we get $\mathfrak{p}^{11} \parallel \tau^8+2\tau^6+4  + (2\tau^6+4\tau^2) \equiv  \tau^8+4\tau^2+4  $ modulo $\mathfrak{p}^{13}$, thus the only remainder that can be congruent to 0 in this case is $ \tau^8+4\tau^3+4\tau^2+4$. Hence, $s=5$ if and only if $\mathfrak{p}^{13} \mid  \tau^8+4\tau^3+4\tau^2+4$.

If $s > 5$ then $\tau^8+4\tau^3+4\tau^2+4$ does not vanish modulo $\mathfrak{p}^{13}$, and we have still have $\mathfrak{p}^{12} \mid \tau^8+2\tau^6+4$, since $s>4$. We have $\mathfrak{p}^{11} \parallel 2\tau^6+4\tau^2 = 2\tau^2(\tau^4+2)$, and $\mathfrak{p}^{11} \parallel \tau^8+4\tau^2+4$, which combined with $\mathfrak{p}^{13} \nmid  \tau^8+4\tau^3+4\tau^2+4$ gives $\mathfrak{p}^{12} \parallel \tau^8+4\tau^3+4\tau^2+4 $. Furthermore, adding $(\tau^3)^4$ to this sum gives a good sum of seven fourth powers which is 0 modulo $\mathfrak{p}^{13}$, hence the fourth level is equal to 6. This finishes proof of the first case.

\end{proof}


\textbf{Case \ref{cef}.6.} $\mathfrak{p}^6 \parallel \tau^4+2$.

Note that $\tau^{12} \equiv (\tau^4+2)^2 \equiv \tau^8 + 4\tau^4+4 \pmod{\mathfrak{p}^{13}}$ implies $\tau^8+4 \equiv 0 \pmod{\mathfrak{p}^{13}}$. Hence
\[ (\tau+1)^4+(\tau+\tau^2+\tau^3+1)^4+2 \cdot 1^4 \equiv (\tau^4+4\tau^3+6\tau^2+4\tau+1) + \] 
\[(\tau^{12}+\tau^8+3\tau^4+4\tau^3+2\tau^2+4\tau+1) + 2 \equiv \tau^8+4 \equiv 0 \pmod{\mathfrak{p}^{13}}. \]
This implies $s \leq 3$. Consider now sums of two fourth powers not divisible by $\p$. If such sum vanish modulo $\p^{13}$, then it has to have a summand in $\p^6\setminus \p^7$ since $\tau^4+2 \in \p^6\setminus \p^7$. By Lemma \ref{2.4} there are four such sums i.e. the ones from the set $A$. Using $2\tau^6+4\tau^2 \equiv 2\tau^2(\tau^4+2) \equiv \tau^{12} \pmod{\mathfrak{p}^{13}}$ we can transform them into the following:  
\begin{center}
\begin{tabular}{@{}l@{}}
    $A_1 = 4\tau^3+4\tau^2+4\tau+(\tau^4+2\tau^2+2),$\\
    $A_2 = 4\tau + (\tau^4+1)(\tau^4+2\tau^2+2),$\\
    $A_3 = 4\tau + (\tau^4+2\tau^2+2), $\\
    $A_4= 4\tau^3+4\tau^2+4\tau+(\tau^4+1)(\tau^4+2\tau^2+2). $
\end{tabular}
\end{center}
Hence, $s=1$ if and only if at least one of the above elements is congruent to 0 modulo $\mathfrak{p}^{13}$.

Consider now the good sums having 3 elements. Now no sum of the form: element from $B$ with a fourth power divisible by $\p$ can vanish modulo $\p^{13}$, because no sum in $B$ has a summand in $\p^6 \setminus \p^7$, hence $\tau^4+2$ cannot be reduced. Now we will consider adding a fourth power divisible by $\p$ to the sums $A_1,A_2,A_3,A_4$. Similarly as before, fourth power divisible by $\p$ has to have an even number of $\tau^4$. This leaves us with three possible fourth powers $\{\tau^8, \tau^{12}+\tau^8, \tau^{12}\}$.
Again, by considering cases, simple computation shows that such a sum is congruent to zero modulo $\p^{13}$ if either for some $i=1,2,3,4$ $A_i \in \p^{12}\setminus \p^{13}$ or for some $i=1,2,3,4$ $A_i+\tau^8$ is divisible by $\p^{12}$. The proof of this case is done.

\textbf{Case \ref{cef}.7.} $\mathfrak{p}^7 \parallel \tau^4+2$. We have:
\[ (\tau^3+\tau^2)^4+4\cdot1^4 \equiv \tau^{12}+\tau^8+4 \equiv \tau^8+4\tau^4+4 \equiv (\tau^4+2)^2 \equiv 0 \pmod{\mathfrak{p}^{13}}\]
thus $s \leq 4$. Application of Lemma \ref{2.4} finishes the proof.

\textbf{Case \ref{cef}.8.} $\mathfrak{p}^8 \parallel \tau^4+2$. 

By construction from the case \ref{cef}.7 and Lemma \ref{2.2} we get $2 \leq s \leq 4$. In this case we will make use of the sets $A$ and $B$. In order to obtain $s=2$ we have to add a fourth power divisible by $\p$ to elements from $A$ or $B$. If we add such an element to element from $A$, then the resulting sum will be either in $\p^4\setminus \p^5$ or $\p^6\setminus \p^7$. Again, every element from $B$ has an even number of $\tau^4$, hence the fourth power we are adding has to have an odd number of $\tau^4$ and this leaves only $\tau^4$ or $\tau^8+2\tau^6+\tau^4$. Notice that $2\tau^4+4 \equiv \tau^{12} \equiv 2\tau^8 \equiv \tau^8+2\tau^4 \pmod{\mathfrak{p}^{13}} \implies \tau^8 \equiv 2\tau^4 \equiv 4$ as well as $2\tau^6 \equiv 4\tau^2 \pmod{\mathfrak{p}^{13}}$. Using those properties we can simplify the elements of $B$:  
\begin{center}
\begin{tabular}{r c l}
$2$ & $\equiv$ & $ 2$,\\
$2\tau^4+4\tau^2+2$ & $\equiv$ & $ \tau^8+4\tau^2+2$\\
$\tau^{12} + 2\tau^6 + 4\tau^3 + 2$ & $\equiv$ & $ \tau^{12}+4\tau^3+4\tau^2+2 $\\
$\tau^{12}+ 2\tau^6 + 2\tau^4 + 4\tau^3 + 4\tau^2 + 2$ & $\equiv$ & $ \tau^{12}+\tau^8+4\tau^3+2 $ \qquad \qquad \qquad \ \\
$\tau^8 + 2$ & $\equiv$ & $ \tau^8+2 $\\
$\tau^{12}+\tau^8+2\tau^4+4\tau^2+2$ & $\equiv$ & $ 4\tau^2+2$\\
$\tau^8+2\tau^6+2\tau^4+4\tau^3+4\tau^2+2$ & $\equiv$ & $ 4\tau^3+2$\\
$\tau^{12}+\tau^8+2\tau^6+4\tau^3+2$ & $\equiv$ & $ \tau^{12}+\tau^8+4\tau^3+4\tau^2+2$
\end{tabular}
\end{center}
When we add either $\tau^4$ or $\tau^8+2\tau^6+\tau^4$ to each of the above remainders, we see that the only elements which can possibly vanish modulo $\mathfrak{p}^{13}$ are the following:
\[(\tau^8+\tau^4+2), \quad 4\tau^2+ (\tau^8+\tau^4+2), \quad \tau^{12}+4\tau^3+(\tau^8+\tau^4+2), \quad \tau^{12}+4\tau^3+4\tau^2 + (\tau^8+\tau^4+2).\]
Hence,  $s=2$ if and only if at least one remainder from the above set is 0 modulo $\mathfrak{p}^{13}$. 

 Using the properties $\tau^8\equiv 2\tau^4 \equiv 4$ and $2\tau^6 \equiv 4\tau^2$  modulo $\mathfrak{p}^{13}$ let us rewrite the elements of $A$:
\begin{center}
\begin{tabular}{r c l}
    $\tau^4 + 4\tau^3 + 6\tau^2 + 4\tau +2$ & $\equiv$ & $\tau^4 + 4\tau^3 + 6\tau^2 + 4\tau +2$  \\
   $\tau^8 + 2\tau^6 + 3\tau^4 + 2\tau^2 + 4\tau + 2$ & $\equiv$ & $\tau^{12}  + \tau^4 + 6\tau^2 + 4\tau + 2$ \\
    $\tau^{12} + 2\tau^6 + \tau^4 + 6\tau^2 + 4\tau +2$ & $\equiv$ & $\tau^{12}  + \tau^4 + 2\tau^2 + 4\tau +2$ \qquad \qquad \quad \quad \\
    $\tau^{12}+\tau^8+3\tau^4+4\tau^3+2\tau^2+4\tau+2$ & $\equiv$ & $\tau^4+4\tau^3+2\tau^2+4\tau+2$
\end{tabular}
\end{center}

Consider now good sums of four fourth powers. Clearly, every such sum has to have 4 as a summand. Even more,  every such sum which can be congruent to 0 modulo $\mathfrak{p}^{13}$ is also congruent to $0 \equiv \tau^{12} + 2\tau^8 \equiv \tau^{12} + \tau^8 + 4 \pmod{\mathfrak{p}^{13}}$. Sums of four fourth powers not divisible by $\p$ we already considered in Lemma \ref{2.4}, and there is no sum of the form $\tau^{12} + \tau^8 + 4$. As a consequence, no such sum can vanish modulo $\p^{13}$.

Hence, we have to consider a good sum of four fourth powers consisting of two elements not divisible by $\p$ and two divisible. Again, a rather tedious calculation shows that no element of the form $A+C$ can vanish modulo $\p^{13}$. We are left with considering elements of the form $B+C$. Element from $C$ which we are adding has to have an odd number of $\tau^4$. Let us recall the possible remainders from the set $C$.
\[\tau^8 + \tau^4, \quad \tau^{12}+\tau^8+\tau^4, \quad \tau^{12}+\tau^4, \quad \tau^{12}+2\tau^6 + \tau^4, \quad 2\tau^6 + \tau^4, \quad \tau^{12}+\tau^8+2\tau^6+\tau^4.\]

Some of the above elements have a summand in $\mathfrak{p}^8\setminus\p^9$, similarly as elements of $B$. If we add two remainders with parity of summands in $\p^8\setminus\p^9$, the resulting sum cannot vanish because of the presence of $\tau^4+2$ which is in  $\p^8\setminus\p^9$. We list the remaining possibilities below.
\begin{center}
\begin{tabular}{@{}l@{}}
    $2 + (\tau^8+\tau^4) \equiv (\tau^8+\tau^4+2)$,\\
    $2 + (\tau^{12}+\tau^8+\tau^4) \equiv \tau^{12}+(\tau^8+\tau^4+2)$,\\
    $2 + ( \tau^{12}+\tau^8+2\tau^6+\tau^4) \equiv \tau^{12}+4\tau^2+(\tau^8+\tau^4+2)$,\\
    $(\tau^8+4\tau^2+2) + (\tau^{12}+\tau^4) \equiv \tau^{12}+4\tau^2+ (\tau^8+\tau^4+2)$,\\
    $(\tau^8+4\tau^2+2) + (\tau^{12}+2\tau^6+\tau^4) \equiv \tau^{12}+(\tau^8+\tau^4+2)$,\\
    $(\tau^8+4\tau^2+2) + (2\tau^6+\tau^4) \equiv (\tau^8+\tau^4+2)$,\\
    $(\tau^{12}+4\tau^3+4\tau^2+2)+ (\tau^8+\tau^4) \equiv \tau^{12}+4\tau^3+4\tau^2+(\tau^8+\tau^4+2) $,\\
    $(\tau^{12}+4\tau^3+4\tau^2+2)+(\tau^{12}+\tau^8+\tau^4) \equiv 4\tau^3+4\tau^2+(\tau^8+\tau^4+2) $,\\
    $(\tau^{12}+4\tau^3+4\tau^2+2)+( \tau^{12}+\tau^8+2\tau^6+\tau^4) \equiv 4\tau^3+(\tau^8+\tau^4+2) $,\\
    $(\tau^{12}+\tau^8+4\tau^3+2) + (\tau^{12}+\tau^4) \equiv 4\tau^3 + (\tau^8+\tau^4+2)  $,\\
    $(\tau^{12}+\tau^8+4\tau^3+2) + (\tau^{12}+2\tau^6+\tau^4) \equiv 4\tau^3+4\tau^2+(\tau^8+\tau^4+2)$,\\
    $(\tau^{12}+\tau^8+4\tau^3+2) + (2\tau^6+\tau^4) \equiv \tau^{12}+4\tau^3+4\tau^2+(\tau^8+\tau^4+2)$ ,\\
    $(\tau^8+2) + (\tau^{12}+\tau^4) \equiv \tau^{12}+(\tau^8+\tau^4+2) $, \\
    $(\tau^8+2) + (\tau^{12}+2\tau^6+\tau^4) \equiv \tau^{12}+4\tau^2+(\tau^8+\tau^4+2)$, \\
    $(\tau^8+2) + (2\tau^6+\tau^4) \equiv 4\tau^2+(\tau^8+\tau^4+2)$, \\
    $(4\tau^2+2) + (\tau^8+\tau^4) \equiv 4\tau^2+(\tau^8+\tau^4+2) $, \\
    $(4\tau^2+2) + (\tau^{12}+\tau^8+\tau^4) \equiv \tau^{12}+4\tau^2+(\tau^8+\tau^4+2) $, \\

    $(4\tau^2+2) + (\tau^{12}+\tau^8+2\tau^6+\tau^4) \equiv \tau^{12}+(\tau^8+\tau^4+2) $, \\
    $(4\tau^3+2) + (\tau^8+\tau^4) \equiv 4\tau^3+(\tau^8+\tau^4+2) $, \\
    $(4\tau^3+2) + (\tau^{12}+\tau^8+\tau^4) \equiv \tau^{12}+4\tau^3+(\tau^8+\tau^4+2) $, \\
    
    $(4\tau^3+2) + (\tau^{12}+\tau^8+2\tau^6+\tau^4) \equiv \tau^{12}+4\tau^3+4\tau^2+(\tau^8+\tau^4+2) $, \\
    $(\tau^{12}+\tau^8+4\tau^3+4\tau^2+2) + (\tau^{12}+\tau^4) \equiv 4\tau^3+4\tau^2 +(\tau^8+\tau^4+2)$, \\
    $(\tau^{12}+\tau^8+4\tau^3+4\tau^2+2) + (\tau^{12}+2\tau^6+\tau^4)  \equiv 4\tau^3+(\tau^8+\tau^4+2) $, \\
    $(\tau^{12}+\tau^8+4\tau^3+4\tau^2+2) + (2\tau^6+\tau^4)  \equiv \tau^{12} + 4\tau^3 + (\tau^8+\tau^4+2) $, \\
\end{tabular}
\end{center}
After removing any duplicates and those remainders which are zero if $s=2$, we obtain that $s=3$ if and only if one of the following element is congruent to 0 modulo $\mathfrak{p}^{13}$:
\[ \tau^{12}+ (\tau^8+\tau^4+2), \quad \tau^{12}+4\tau^2 + (\tau^8+\tau^4+2), \quad 4\tau^3+(\tau^8+\tau^4+2), \quad 4\tau^3+4\tau^2
+ (\tau^8+\tau^4+2).\]
One can easily check that if one of the above elements vanishes, then $s >2$. Also, if $s>3$, then all of the above four remainders and all of the previous four remainders from case $s=2$, are nonzero modulo $\mathfrak{p}^{13}$. This implies that $\mathfrak{p}^9 \parallel \tau^8+\tau^4+2$ iff $s=4$. Let us explain that.

Of course, $\mathfrak{p}^9 \mid \tau^8+\tau^4+2$. If $\mathfrak{p}^{10} \parallel \tau^8+\tau^4+2$ then $\p^{11} \mid 4\tau^2 +\tau^8+\tau^4+2$. If $\p^{11} \parallel4\tau^2 +\tau^8+\tau^4+2$ then the element $ 4\tau^3+4\tau^2 +\tau^8+\tau^4+2$ is either in $\p^{12}\setminus\p ^{13}$ or in $\p^{13}$, in both cases we get $s=3$. If $\p^{12} \parallel4\tau^2 +\tau^8+\tau^4+2$ then the element $\tau^{12}+4\tau^2 + (\tau^8+\tau^4+2)$ vanishes hence $s=3$. If $\p^{13} \parallel4\tau^2 +\tau^8+\tau^4+2$ then $s=2$.

If $\mathfrak{p}^{11} \parallel \tau^8+\tau^4+2$ then either $4\tau^3+(\tau^8+\tau^4+2)$ or $\tau^{12}+4\tau^3+(\tau^8+\tau^4+2)$ vanishes modulo $\p^{13}$ and $s=3$ or $s=2$ respectively.

If $\mathfrak{p}^{12} \parallel \tau^8+\tau^4+2$ the $\tau^{12}+ (\tau^8+\tau^4+2)$
is congruent to zero and $s=3$.

If $\mathfrak{p}^{13} \mid \tau^8+\tau^4+2$ then clearly $s=2$. This finishes the proof of this case.

\textbf{Case \ref{cef}.9.} $\mathfrak{p}^9 \parallel \tau^4+2$. 
This case follow directly from the construction in the case \ref{cef}.7 and Lemma \ref{2.4}.

\textbf{Case \ref{cef}.10.} $\mathfrak{p}^{10} \parallel \tau^4+2$. By Lemma \ref{2.2} we have $s \geq 2$. In Lemma \ref{2.3} we considered good sums having 3 elements. Using $\tau^8+2\tau^4 = \tau^4(\tau^4+2) \equiv 0$ and $2\tau^6 \equiv 4\tau^2 \pmod{\mathfrak{p}^{13}}$ we recall elements that possibly can vanish modulo $\p^{13}$:
\[\tau^{12}+4\tau^3+4\tau^2+\tau^4+2, \quad \tau^{12}+4\tau^2+\tau^4+2.\]
Hence, $s=2$ if and only if one of the above remainders vanish modulo $\p^{13}$.

If $s>2$ we have $\mathfrak{p}^{12} \not\parallel 4\tau^2+\tau^4+2$ and $\mathfrak{p}^{12} \not\parallel 4\tau^3+4\tau^2+\tau^4+2$. Either $\mathfrak{p}^{13} \mid 4\tau^2+\tau^4+2$ or $\mathfrak{p}^{13} \mid 4\tau^3+4\tau^2+\tau^4+2$. Hence, adding a fourth power $(\tau^3)^4 = \tau^{12}$, we obtain a good sum of four fourth powers which vanishes modulo $\mathfrak{p}^{13}$.

\textbf{Case \ref{cef}.11.} $\mathfrak{p}^{11} \parallel \tau^4+2$. Similarly as before, we have $s \geq 2$. Additionally, the following is true $\tau^8+2\tau^4 \equiv 0$ and $2\tau^6 \equiv 4\tau^2$ modulo $\mathfrak{p}^{13}$. Consider now good sums of three fourth powers from Lemma \ref{2.4}. After simplifying, we see that there is only possible remainder which can vanish modulo $\p^{13}$-- $4\tau^3+\tau^4+2$.  Hence, $s=2$ if and only if the aforementioned element vanishes. 

If $s>2$ then $\mathfrak{p}^{12} \parallel 4\tau^3+\tau^4+2$. Adding a fourth power $(\tau^3)^4 = \tau^{12}$ yields $s=3$.

\textbf{Case \ref{cef}.12.} $\mathfrak{p}^{12} \parallel \tau^4+2$. We have $(\tau^3)^4+\tau^4+2 \cdot 1^4 \equiv 0 \pmod{\mathfrak{p}^{13}}$, so $s \leq 3$. By Lemma \ref{2.3} we have $s>2$.

\textbf{Case \ref{cef}.13.} $\mathfrak{p}^{13} \mid \tau^4+2$. We have $\tau^4+ 2\cdot 1^4 \equiv 0 \pmod{\mathfrak{p}^{13}}$, so $s \leq 2$. By lemma \ref{2.2} we have $s > 1$. This finishes the proof.

\section{Computation of $s_4(K_{\mathfrak{p}})$} \label{step2}  

Up to this point we where considering an arbitrary quartic number field or arbitrary number field with a prime $\p \mid (2)$ with $e=4, \ f=1$. The next step for getting the lower bounds for $s_4$ form the theorems stated to this point is to compute how the ideal $(2)$ factorizes in $\OO_K$. This can be done by considering the basis of $\OO_K$ and computing the factorization from it. For a general number field of degree 4, the basis is not always explicitly known, and for when it is known, the number of cases of how it can look like is not particularly small (see section \ref{gen} for more discussion). Thus in this section, we will only compute lower bounds for the fourth level of biquadratic number fields of the form  $K = \mathbb{Q}(\sqrt{m},\sqrt{n})$ for $m,n \in \mathbb{Z}$, for which we have only 4 cases on how the basis of $\OO_K$ can look like.

Let us note that there is another square root in this biquadratic field.  Let $d = \gcd(m,n)$ and $k = \frac{mn}{d^2}$, then $\sqrt{k} \in K$. From \cite[Exercise 2.42]{marcus2018} we know how does the integral basis of $\mathcal{O}_K$ look like in biquadratic case. Now we will describe the general construction which will allow to determine how does the ideal $(2)$ factorizes in $\mathcal{O}_K$. For $K$ as above, its ring of integers $\mathcal{O}_K$ is of the following form
$$\mathcal{O}_K = \mathbb{Z} \oplus a\mathbb{Z}\oplus b\mathbb{Z}\oplus c \mathbb{Z} $$
for some $a,b,c \in K$. If the elements $a,b,c$ are explicitly given, then we may represent in this form each of the element $a^2,b^2,c^2,ab,bc,ac$. To be precise, each of the above elements can be expressed as a linear combination of $a,b,c$ with integral coefficients. With this, we are able to explicitly represent $\mathcal{O}_K$ as a quotient ring $\mathbb{Z}[x,y,z]/I$, where $I$ is the ideal generated by 6 linear combinations, when we replace $a$ with $x$, $b$ with $y$ and $c$ with $z$. We have obtained isomorphic rings with the isomorphism being the above identification.

Let us now discuss how to obtain factorization of the ideal $(2)$ in $\OO_K$. Consider the quotient ring of $\mathbb{Z}[x,y,z]/I$ divided by $(2)$. By the isomorphisms theorems this ring is isomorphic to $\mathbb{F}_2 [x,y,z]/\bar{I}$, where $\mathbb{F}_2$ is the field with two elements and $\bar{I}$ is the ideal $I$ with coefficients reduced modulo 2, and additionally to $\OO_K/(2)$. This is a finite ring with 16 elements and explicit calculations are possible. Hence, if we have $I$, it is a finite task to determine how does the ideal $(2)$ factorizes. We will often denote by $I$ the ideal of coefficients already reduced modulo 2. Up to rearranging the numbers $n,m,k$ we get 4 cases with some subcases.

We will now prove the second main result of the paper:

\begin{thm}\label{main2} Let $m,n$ be two square free integers. Let $K = \mathbb{Q}(\sqrt{m},\sqrt{n})$, $\mathfrak{p} \mid (2)$, $d = \gcd(m,n)$, $k = \frac{mn}{d^2}$. Then $s = s_4(K_{\mathfrak{p}})$ is one of $\{1,2,3,4,6,15\}$, specifically $($up to rearrangement of $m,n,k)$
\begin{center}
    if $m \equiv 3$, $n\equiv k \equiv 2 \pmod{4}$ then $s = \begin{cases}
        \textbf{1} \text{ if } 32 \mid n+k \text{ and } \frac{nk}{4} \equiv 15 \pmod{16} \\
        \text{or } 16 \parallel n+k \text{ and } \frac{nk}{4} \equiv 7 \pmod{16}\\
        \textbf{2} \text{ if } 32 \mid n+k \text{ and } \frac{nk}{4} \equiv 7 \pmod{16} \\
        \text{or } 16 \parallel n+k \text{ and } \frac{nk}{4} \equiv 15 \pmod{16}\\
        \textbf{3} \text{ if } 8 \parallel n+k
    \end{cases}$
\end{center}
\begin{center}
    if $m \equiv 1$, $n \equiv k \equiv 3 \pmod{4}$ then $s = \begin{cases}
        \textbf{4} \text{ if } m \equiv 1 \pmod{8}\\
        \textbf{2} \text{ if } m \equiv 5 \pmod{8}
    \end{cases}$
\end{center}
\begin{center}
    if $m \equiv 1$, $n \equiv k \equiv 2 \pmod{4}$ then $s = \begin{cases}
        \textbf{6} \text{ if } m \equiv 1 \pmod{8}\\
        \textbf{2} \text{ if } m \equiv 5 \pmod{8}
    \end{cases}$
\end{center}
\begin{center}
    if $m \equiv n \equiv k \equiv 1 \pmod{4}$ then $s = \begin{cases}
        \textbf{15} \text{ if } m \equiv n \equiv k \equiv 1 \pmod{8}\\
        \textbf{2} \text{ otherwise.}
    \end{cases}$
\end{center}
\end{thm}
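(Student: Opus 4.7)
My plan follows the blueprint sketched at the start of Section 3. For each of the four congruence regimes for $(m,n,k)$ I would first write down an explicit integral basis $\mathcal{O}_K = \mathbb{Z} \oplus a\mathbb{Z} \oplus b\mathbb{Z} \oplus c\mathbb{Z}$ via \cite[Exercise 2.42]{marcus2018}, encode the six relations between $a^2, b^2, c^2, ab, ac, bc$ as an ideal $I \subset \mathbb{Z}[x,y,z]$, and then analyze the 16-element ring $\mathcal{O}_K/(2) \cong \mathbb{F}_2[x,y,z]/\bar I$. The number of maximal ideals of this finite ring, the degrees of their residue fields, and their nilpotency indices read off the factorization $(2)=\prod \mathfrak{p}_i^e$ together with the invariants $e$ and $f$ of each prime.

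Once $e$ and $f$ are in hand, each case is handled by one of the earlier theorems. In the regime $m \equiv n \equiv k \equiv 1 \pmod 4$, when all three are $\equiv 1 \pmod 8$ the quotient $\mathbb{F}_2[x,y,z]/\bar I$ splits as a product of four copies of $\mathbb{F}_2$, yielding $e=f=1$ and $s=15$ by Theorem \ref{1.1}; if at least one of $m,n,k$ is $\equiv 5 \pmod 8$, the residue degree $f$ becomes even and Theorem \ref{1.2} forces $s=2$. For the mixed cases with $m \equiv 1 \pmod 4$ and $n \equiv k \in \{2,3\} \pmod 4$, the analogous computation should show that when $m \equiv 1 \pmod 8$ we land in the regime $e=2, f=1$ of Theorem \ref{1.3}: the two sub-regimes $n \equiv 3 \pmod 4$ and $n \equiv 2 \pmod 4$ match, respectively, $\pi^2 \not\equiv 2 \pmod{\mathfrak{p}^4}$ and $\pi^2 \equiv 2 \pmod{\mathfrak{p}^4}$ for an appropriate uniformizer $\pi$, giving $s=4$ and $s=6$; when $m \equiv 5 \pmod 8$ the residue degree is again even and Theorem \ref{1.2} gives $s=2$.

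The heart of the proof is the remaining case $m \equiv 3$, $n \equiv k \equiv 2 \pmod 4$. Here I expect the computation in $\mathbb{F}_2[x,y,z]/\bar I$ to yield a single maximal ideal with nilpotency index $4$, so $(2) = \mathfrak{p}^4$ and Theorem \ref{main} applies. I would then choose an explicit $\tau \in \mathfrak{p} \setminus \mathfrak{p}^2$ as a short $\mathbb{Z}$-combination of $1, \sqrt m, \sqrt n, \sqrt k$, compute $\tau^4 + 2$ inside $\mathcal{O}_K$, and identify both $v_\mathfrak{p}(\tau^4+2)$ and the leading $\mathbb{F}_2$-coefficients of $(\tau^4+2)/\pi^{v}$. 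The key observation is that the coarse invariant $\mathfrak{p}^\alpha \parallel \tau^4+2$ is governed by $v_2(n+k)$ (giving the dichotomy $32 \mid n+k$ vs.\ $16 \parallel n+k$ vs.\ $8 \parallel n+k$), whereas the finer corrections are controlled by $nk/d^2 \pmod{2^j}$, producing exactly the residues $7$ and $15$ modulo $16$ appearing in the statement.

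The principal obstacle is executing this last paragraph cleanly. One must carry out the $4$th power expansion of $\tau$ in the integral basis, reduce modulo $\mathfrak{p}^{13}$, and match not merely the outer alternative $\mathfrak{p}^\alpha \parallel \tau^4+2$ but also the finer polynomial conditions in Theorem \ref{main} (for instance divisibility of $\tau^{12}+\tau^8+2\tau^6+4\tau^3+4$ or of the auxiliary elements $A_1,\dots,A_4$) against congruences on $n$ and $k$ modulo powers of $2$. The simplifications $\tau^8+4 \equiv \tau^{12}$ and $2\tau^2(\tau^4+2) \equiv 0 \pmod{\mathfrak{p}^{13}}$ exploited throughout Section \ref{cef} should cut the bookkeeping down substantially, but verifying that each branch of Theorem \ref{main} collapses to exactly one of the three outcomes $s \in \{1,2,3\}$ with the stated boundaries is the delicate step; this is where I would spend the bulk of the technical work, and where a computer algebra check on a handful of small $(m,n)$ would be valuable as a sanity test.
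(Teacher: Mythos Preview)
Your proposal follows essentially the same approach as the paper: write down the integral basis from \cite[Exercise 2.42]{marcus2018}, encode the six quadratic relations as an ideal $\bar I\subset\mathbb{F}_2[x,y,z]$, read off the splitting of $(2)$ from the structure of the 16-element ring $\mathbb{F}_2[x,y,z]/\bar I$ (the paper does this with Macaulay2), and then feed the resulting $(e,f)$ into Theorems \ref{1.1}--\ref{1.3} or Theorem \ref{main}. Your identification of which theorem handles which congruence regime matches the paper in every case.

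One small correction in the totally ramified case $m\equiv 3$, $n\equiv k\equiv 2\pmod 4$: with the paper's choice $\tau=\tfrac{\sqrt n+\sqrt k}{2}+1$ one always lands in the single branch $\mathfrak{p}^6\parallel \tau^4+2$ of Theorem \ref{main}, regardless of $v_2(n+k)$. The trichotomy $32\mid n+k$ versus $16\parallel n+k$ versus $8\parallel n+k$ does not move $\alpha$; it instead controls the $\mathfrak{p}$-adic size of the auxiliary elements $A_1,\dots,A_4$ appearing in that branch (the paper computes $A_1$ explicitly in terms of $n,k,d$ and shows the other $A_i$ reduce to it modulo the relevant power of $\mathfrak p$). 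So the polynomial you mention from the $\mathfrak{p}^5$ branch, $\tau^{12}+\tau^8+2\tau^6+4\tau^3+4$, never enters; only the $A_i$ conditions do. This is not a gap in your plan, just a refinement you would discover once you actually expand $\tau^4+2$ in the basis.
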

\begin{proof}
Again, proof will be a case by case analysis. Moreover some computations were conducted in Macaulay2, especially when we need to explicitly know the structure of finite rings and its prime ideals.

\textbf{Case 3.1.} $m \equiv 3$, $n \equiv k \equiv 2 \pmod{4}$ and the basis of $\mathcal{O}_K$ is $\{1, \sqrt{m}, \sqrt{n}, \frac{\sqrt{n}+\sqrt{k}}{2}\} = \{1,a,b,c\}$. \\
We start with representing the following elements as integral linear combinations of the basis:
\begin{center}
\begin{tabular}{@{}l@{}}
    $a^2 = m \equiv 1 \pmod{2}$\\
    $b^2 = n \equiv 0 \pmod{2}$\\
    $c^2 = \frac{n+k}{4} + \frac{n}{2d}\sqrt{m} \equiv a \pmod{2}$\\
    $ab = d\sqrt{k} = 2d \cdot \frac{\sqrt{n} + \sqrt{k}}{2} - d\sqrt{n} \equiv b \pmod{2}$\\
    $ac = \frac{d\sqrt{k}+ \frac{m}{d} \sqrt{n}}{2} = d \cdot \frac{\sqrt{n} + \sqrt{k}}{2} + \frac{\frac{m}{d} - d}{2} \sqrt{n} \equiv c + b \pmod{2}$\\
    $bc = \frac{n + \frac{n}{d} \sqrt{m}}{2} \equiv 1 + a \pmod{2}.$
\end{tabular}
\end{center}
In the above calculations, we have used the fact that $8 \mid n+k$. If this is not the case then $n \equiv k \pmod{8}$, but $n \equiv k \equiv \frac{mn}{d^2} \pmod{8} \implies d^2 \equiv m \equiv 3 \pmod{4} $ a contradiction. 

We have obtained the ideal $I = (x^2+1,y^2,z^2+x,xy+y,xz+z+y,yz+1+x)$ with already reduced coefficients modulo 2. constructed from the above remainders modulo 2, we have that $\mathbb{F}_2[x,y,z]/I \cong \mathcal{O}_K/(2)$. Using the calculator Macaulay2 we will find prime ideals $P_1, \dots , P_r$ of $\mathbb{F}_2[x,y,z]/I$ and check what is the smallest power $\alpha$, so that $(P_1 \cdots P_r)^{\alpha} =(0)$. This will imply that $I = (P_1 \cdots P_r)^{\alpha}$ and $(2) = (\mathfrak{p}_1 \cdots \mathfrak{p}_r)^{\alpha}$ (the ramification indices must be equal, because biqadratic extension is Galois). By the introduction, we know what are the possible factorizations.

We will now proceed with computations in Macaulay2. At the very end we present first case on how we did it. Hence, in order to shorten the paper we will avoid repeating the code in each case.

For the ideal $I$ we get that the ideal $J =  (x+1,y,z+1)$ is prime with $J^2 = (x+y+1,x+1)$, $J^3 = (x+y+1)$ and $J^4= 0$ all in $\mathbb{F}_2[x,y,z]/I$. Here, $x,y,z$ of course denote their corresponding images in the quotient ring. $J^4= 0$ implies $(2) = \mathfrak{p}^4$, hence $e=4,f=1$ in $K$. We have  $z+1 \in J$ and $z+1 \not\in J^2$ hence, as the generator of $\p$ we may take $ c+1 = \tau \in \mathfrak{p}\setminus\mathfrak{p}^2$. Now, using $\tau = c+1$, $c^2 \equiv a$ and $a^2 = m \equiv 3 \pmod{4}$ we have to check which case from Theorem \ref{main} we can apply. See that $\tau^4 = (c+1)^4 = c^4+4c^3+6c^2+4c+1 \equiv c^4+2c^2+1 \equiv (c^2+1)^2 \equiv  (a+1)^2 \equiv a^2+2a+1 \equiv 2a \pmod{4 = \mathfrak{p}^8}$. Thus $\tau^4+2 \equiv 2(a+1) \pmod{4 = \mathfrak{p}^8}$. Now $x+1 \in J^2$ and $x+1 \not\in J^3$ implies $\mathfrak{p}^2 \parallel a+1 \implies \mathfrak{p}^6 \parallel \tau^4+2$.

By Theorem \ref{main} the fourth level $s$ depends on remainders modulo $\mathfrak{p}^{13}$ of the following:
\begin{center}
\begin{tabular}{@{}l@{}}
    $A_1 = 4\tau^3+4\tau^2+4\tau+(\tau^4+2\tau^2+2),$\\
    $A_2 = 4\tau + (\tau^4+1)(\tau^4+2\tau^2+2),$\\
    $A_3 = 4\tau + (\tau^4+2\tau^2+2), $\\
    $A_4= 4\tau^3+4\tau^2+4\tau+(\tau^4+1)(\tau^4+2\tau^2+2). $
\end{tabular}
\end{center}
Computation of $A_2,A_3,A_4$ will be based on the value of $A_1$.
\[\tau^4+2\tau^2+2 = (c+1)^4 + 2(c+1)^2+2 = c^4 + 4c^3 + 8c^2+8c+5 \equiv c^4+ 4c^3+8a+8c+5 \pmod{\mathfrak{p}^{13}},\]
\[4\tau^3 + 4\tau^2 + 4\tau = 4c^3+16c^2+24c+12 \equiv 4c^3+8c+12 \pmod{\mathfrak{p}^{13}} \]
\[ \implies A_1 = 4\tau^3+4\tau^2+4\tau+(\tau^4+2\tau^2+2) \equiv c^4+8c^3+8a+1 \equiv   \frac{(n+k)^2}{16} + \frac{nk}{4} + \frac{\frac{n}{d}(n+k)a}{4} + 8(c+b) + 8a + 1 \]
\[ \equiv \frac{(n+k)^2}{16} + \frac{nk}{4} + \frac{\frac{n}{d}(n+k)a}{4} + 1  \pmod{\mathfrak{p}^{13}}. \]
In the last congruence we used the fact that $8(a+b+c) \equiv 0 \pmod{\mathfrak{p}^{13}}$, and this follows from $(a+b+c) \equiv (a+b+c) + (a+b+1) \equiv c+1 \equiv 0 \pmod{\mathfrak{p}}$. We now have 2 cases:
\\ \\
$1^{\circ}$ $16 \mid n+k$. In this case $\frac{nk}{4} \equiv 7 \pmod{8}$ and $A_1 \equiv \frac{nk}{4} + \frac{\frac{n}{d}(n+k)a}{4} + 1  \pmod{\mathfrak{p}^{13}}$, hence $A_1 \equiv 7 + \frac{32a}{4}+ 1 \equiv 0 \pmod{8 = \mathfrak{p}^{12}}$. This readily implies $\mathfrak{p}^{9} \parallel \tau^4+2\tau^2+2$. This on the other hand implies $A_1 \equiv A_4, \  A_2 \equiv A_3 \pmod{\mathfrak{p}^{13}}$. We have $A_2 \equiv A_1 + 4\tau^2+4\tau^3 \pmod{\mathfrak{p}^{13}}$ which implies that $\mathfrak{p}^{10} \parallel A_2$. This gives that the value of $s$ depends only on $A_1 \pmod{\mathfrak{p}^{13}}$ - if $A_1 \equiv 0$ then $s=1$ and if not, then $\mathfrak{p}^{12} \parallel A_1$, so $s=2$ in this case.

If $32 \mid n+k$ and $\frac{nk}{4} \equiv 15 \pmod{16}$ or $16 \parallel n+k$ and $\frac{nk}{4} \equiv 7 \pmod{16}$ we have $A_1 \equiv 15 + 1 \equiv 0 \pmod{\p^{13}}$ or $A_1 \equiv 7 + 8a + 1 \equiv 8(a+1) \pmod{\p^{13}}$, respectively. In both cases we get $s=1$. 

If $32 \mid n+k$ and $\frac{nk}{4} \equiv 7 \pmod{16}$ or $16 \parallel n+k$ and $\frac{nk}{4} \equiv 15 \pmod{16}$ we have $\mathfrak{p}^{12} \parallel A_1$, hence $s=2$.
\\ \\
$2^{\circ}$ $8 \parallel n+k$. In this situation we have $\frac{nk}{4} \equiv 3 \pmod{8}$. Also, $A_1 \equiv 4 + 3 + 4a + 1 \equiv 4a \pmod{8 = \mathfrak{p}^{12}}$. Since $a \not\in \mathfrak{p}$ we have $\mathfrak{p}^8 \parallel A_1$. This implies that $\mathfrak{p}^8 \parallel A_i$ for $i=2,3,4$. By Theorem \ref{main}, to determine $s$ we need to check if any of $\{A_1+\tau^8, A_2+\tau^8, A_3+\tau^8, A_4+\tau^8\}$ is congruent to $0 \pmod{\mathfrak{p}^{12} = 8}$. We have the following chain of congruences:
\[ \tau^8 = (c+1)^8 = (c^2+2c+1)^4 = (c^2+1  + 2\gamma)^4  \equiv (a+1)^4 \]
\[  \equiv a^4 + 4a^3+6a^2+4a+1 \equiv m^2 + 4a(m+1) + 6m + 1 \equiv 1 + 18 + 1 \equiv 4 \pmod{8}. \]
Furthermore, since $a+1, \  a+b+1 \not\in (2)$ we obtain
\[ A_1 + \tau^8 \equiv 4a + 4 \equiv 4(a+1) \not\equiv 0 \pmod{8 },\]
\[A_3 + \tau^8 \equiv A_1 + \tau^8 + 4\tau^2+4\tau^3 \equiv 4a + 4 + 4(c+1)^2 + 4(c+1)^3 \equiv 4a + 4c^3 + 16c^2 + 20c + 12\]  \[\equiv 4a + 4(c+b) + 4c + 4 \equiv 4(a+b+1) \not\equiv 0 \pmod{8 }.\]
Also $A_1 \equiv A_4$ and $A_2 \equiv A_3 \pmod{8 = \mathfrak{p}^{12} }$, because $\mathfrak{p}^8 \parallel \tau^4+2\tau^2+2$ . Hence $s=3$ by Theorem \ref{main}.
\\ \\
\textbf{Case 3.2.} $m \equiv 1$, $n \equiv k \equiv 3 \pmod{4}$ and the basis of $\mathcal{O}_K$ is $\{1, \frac{1+\sqrt{m}}{2}, \sqrt{n}, \frac{\sqrt{n}+\sqrt{k}}{2}\} = \{1,a,b,c\}$. As before we calculate:
\begin{center}
\begin{tabular}{@{}l@{}}
$a^2 = \frac{m-1}{4} + \frac{1+\sqrt{m}}{2} \equiv a \text{ or } 1 + a \pmod{2}$ \\
$b^2 = n \equiv 1 \pmod{2}$\\
$c^2 = \frac{n+k+2\sqrt{nk}}{4} = \frac{n+k}{4} + \frac{n}{2d}\sqrt{m} = \frac{n+k-2\frac{n}{d}}{4} + \frac{n}{d} \cdot \frac{1+\sqrt{m}}{2} \equiv a \text{ or } 1+a \pmod{2}$\\
$ab = \frac{\sqrt{n} + d\sqrt{k}}{2} = d \cdot \frac{\sqrt{n}+\sqrt{k}}{2} - \frac{d-1}{2} \sqrt{n} \equiv c \text{ or } c + b \pmod{2} $\\
$ac = \frac{\sqrt{n} + \sqrt{k} + d\sqrt{k} + \frac{m}{d} \sqrt{n} }{4} =  \frac{d+1}{2} \cdot \frac{\sqrt{n}+\sqrt{k}}{2} + (\frac{\frac{m}{d}-d}{4})\sqrt{n} \equiv b+c \text{ or } c \text{ or } b \text{ or } 0 \pmod{2}$\\
$bc = \frac{n + \frac{n}{d}\sqrt{m}}{2} = \frac{n - \frac{n}{d}}{2} + \frac{n}{d} \cdot \frac{1+ \sqrt{m}}{2} \equiv a \text{ or }  1 + a \pmod{2}.$
\end{tabular}
\end{center}
The 'or' in $a^2$ depends on $m \pmod{8}$. The 'or' in $c^2$ depends on $n+k-2\frac{n}{d} \pmod{8}$. The 'or'-s in $ac$ depend on $m \pmod{8}$ and $d \pmod{4}$ and the remaining cases depend on $d \pmod{4}$. Note that $k \equiv mn \pmod{8}$, so by $d \pmod{4}$ and $m \pmod{8}$ we can compute $n+k-2\frac{n}{d} \equiv n+k-2\frac{3}{d} \pmod{8}$. If $m\equiv 1\pmod{8}$, then $n \equiv k \equiv 3 \pmod{4}$ and $k \equiv mn \equiv n \pmod{8}$ gives $n+k \equiv 6 \pmod{8}$. If $m \equiv 5 \pmod{8}$ then $n+k \equiv 2 \pmod{8}$. Hence we have to check 4 cases.

(a) $m \equiv 1 \pmod{8}$, $d \equiv 1 \pmod{4}$ $ \implies n+k-2\frac{n}{d} \equiv 0 \pmod{8}$. Similarly as before, we obtain an ideal $I = (x^2+x,y^2+1,z^2+x,xy+z,xz+z,yz+x)$. Again, using Macaulay2 we get the following data: ideals $A = (z,y+1,x)$ and $B = (z+1,y+1,x+1)$ are prime and $A^2 = (x,x+z,z)$, $B^2 = (x+1,x+y+z+1)$, $AB = (x+z,x+y+z+1)$ and $(AB)^2 = 0$. This all implies that $(2)=(\mathfrak{p}_1\mathfrak{p}_2)^2$ hence $ e=2, \ f=1$. Also, note that $y+1 \in A$ and $y+1 \not\in A^2$, hence $b+1 = \pi \in \mathfrak{p}_1 \setminus \mathfrak{p}_1^2$ and by Theorem \ref{1.3} $s = s_4(K_{\mathfrak{p}_1})$ depends on $\pi^2 = (b+1)^2 \pmod{\mathfrak{p}_1^4}$. We have $n \equiv 3 \pmod{\mathfrak{p}_1^4}$ and $y \not\in A \implies b = \sqrt{n} \not\in \mathfrak{p}_1$. 

Moreover, we have a chain of equalities and congruences $(b+1)^2 = (\sqrt{n}+1)^2 = n+1+2\sqrt{n} \equiv 2\sqrt{n} \not\equiv 2 \pmod{\mathfrak{p}_1^4}$. To justify this, see that there is an equivalence $2\sqrt{n} = 2b \equiv 2 \iff 2(b+1) \equiv 0 \pmod{\mathfrak{p}_1^4}$ which is does not hold since $\mathfrak{p}_1 \parallel b+1$, so by Theorem \ref{1.3} we have $s=4$ for $K_{\mathfrak{p}_1}$. For $\mathfrak{p}_2$ the situation is the same since the extension $K/\mathbb{Q}$ is Galois.
\\ \\
(b) $m \equiv 1 \pmod{8}$, $d \equiv 3 \pmod{4}$ $ \implies n+k-2\frac{n}{d} \equiv 4 \pmod{8}$. In this case we start with
$I = (x^2+x,y^2+1,z^2+x+1,xy+z+y,xz,yz+x+1)$ and we obtain prime ideals $A=(z,y+1,x+1)$ and $B = (z+1,y+1,x)$ satisfying $A^2 = (x+1,x+z+1,z)$, $B^2=(x,x+y+z)$, $AB = (x+z+1,x+y+z)$ and $(AB)^2=0$. As before we have $(2) = (\mathfrak{p}_1\mathfrak{p}_2)^2 \implies e=2, \ f=1$ and $b+1 = \pi \in \mathfrak{p}_1\setminus\mathfrak{p}_1^2$ same reasoning as in (a) yields $s=4$.
\\ \\
(c) $m \equiv 5 \pmod{8}$, $d \equiv 1 \pmod{4} \implies n+k-2\frac{n}{d} \equiv 4 \pmod{8}$. This time we have an ideal
$I = (x^2+x+1,y^2+1,z^2+x+1,xy+z,xz+z+y,yz+x)$ and a a single prime ideal $J = (y+1,x+z)$ satisfying $J^2 = 0.$ Hence, $ (2) = \mathfrak{p}^2 \implies e=2, \ f=2 \implies s=2$ by Theorem \ref{1.2}.
\\ \\
(d) $m \equiv 5 \pmod{8}$, $d \equiv 3 \pmod{4}$ $ \implies n+k-2\frac{n}{d} \equiv 0 \pmod{8}$. We get
$I = (x^2+x+1,y^2+1,z^2+x,xy+z+y,xz+y,yz+x+1)$ and a prime ideal $J = (y+1,x+z+1)$ satisfying $J^2 = 0.$ Thus $(2) = \mathfrak{p}^2 \implies e=2,\ f=2 \implies s=2$ by Theorem \ref{1.2}.
\\ \\
\textbf{Case 3.3.}  $m \equiv 1$, $n \equiv k \equiv 2 \pmod{4}$ and the basis of $\mathcal{O}_K$ is given by $\{1, \frac{1+\sqrt{m}}{2}, \sqrt{n}, \frac{\sqrt{n}+\sqrt{k}}{2}\} = \{1,a,b,c\}$.  As before, we calculate:
\begin{center}
\begin{tabular}{@{}l@{}}
$a^2 = \frac{m-1}{4} + \frac{1+\sqrt{m}}{2} \equiv a \text{ or } 1 + a\pmod{2}$, \\
$b^2 = n \equiv 0 \pmod{2}$,\\
$c^2 = \frac{n+k+2\sqrt{nk}}{4} = \frac{n+k}{4} + \frac{n}{2d}\sqrt{m} = \frac{n+k-2\frac{n}{d}}{4} + \frac{n}{d} \cdot \frac{1+\sqrt{m}}{2} \equiv 0 \pmod{2}$,\\

$ab = \frac{\sqrt{n} + d\sqrt{k}}{2} = d \cdot \frac{\sqrt{n}+\sqrt{k}}{2} - \frac{d-1}{2} \sqrt{n} \equiv c \text{ or } c+b \pmod{2}$,\\
$ac = \frac{\sqrt{n} + \sqrt{k} + d\sqrt{k} + \frac{m}{d} \sqrt{n} }{4} =  \frac{d+1}{2} \cdot \frac{\sqrt{n}+\sqrt{k}}{2} + (\frac{\frac{m}{d}-d}{4})\sqrt{n} \equiv 0 \text{ or } c \text{ or } b \text{ or } c+b \pmod{2}$,\\
$bc = \frac{n + \frac{n}{d}\sqrt{m}}{2} = \frac{n - \frac{n}{d}}{2} + \frac{n}{d} \cdot \frac{1+ \sqrt{m}}{2} \equiv 0 \pmod{2} $.
\end{tabular}
\end{center}
and all the 'or'-s in the above remainders depend only on $m \pmod{8}$ and $d \pmod{4}$, hence we have to check 4 cases.
\\ \\
(a) $m \equiv 1 \pmod{8}$, $d \equiv 1 \pmod{4}$. We construct an ideal
$I = (x^2+x,y^2,z^2,xy+z,xz+z,yz)$ and obtain prime ideals $A = (z,y,x)$ and $B = (z,y,x+1)$ satisfying $A^2 = (z,x)$, $B^2=(y+z,x+1)$, $AB = (y+z,z)$ and $(AB)^2 = 0$. Hence we have $(2) = (\mathfrak{p}_1\mathfrak{p}_2)^2 \implies e=2, \ f=1$.  Because $y \in A, \not\in A^2$ (the same is true for $\mathfrak{p}_2$) we can take $b = \pi \in \mathfrak{p}_1 \setminus \mathfrak{p}_1^2$. Then  $\pi^2 = b^2 = n \equiv 2 \pmod{\mathfrak{p}_1^4}$, hence by Theorem \ref{1.3} we have $s=6$, for $\mathfrak{p}_1$ and $\mathfrak{p}_2$.
\\ \\
(b) $m \equiv 1 \pmod{8}$, $d \equiv 3 \pmod{4}$. We have $I = (x^2+x,y^2,z^2,xy+z+y,xz,yz)$ and two prime ideals $A = (z,y,x)$ and $B=(z,y,x+1)$ with $A^2 = (y+z,x)$, $B^2 = (z,x+1)$, $AB = (y+z,z)$ and $(AB)^2 = 0$. As previously, we obtain $(2) = (\mathfrak{p}_1\mathfrak{p}_2)^2$ thus $e=2, \ f=1$. Take $b = \pi \in \mathfrak{p}_1 - \mathfrak{p}_1^2$ (which is also true for $\mathfrak{p}_2$). Same reasoning as in (a) shows that $s=6$ for both prime ideals.
\\ \\
(c) $m \equiv 5 \pmod{8}$, $d \equiv 1 \pmod{4}$. We have $I = (x^2+x+1,y^2,z^2,xy+z,xz+z+y,yz)$ and a prime ideal $J = (z,y)$ satisfying $J^2=0$. This gives $(2) = \mathfrak{p}^2 \implies e=2,\ f=2.$ Hence, $ s=2$ by Theorem \ref{1.2}.
\\ \\
(d) $m \equiv 5 \pmod{8}$, $d \equiv 3 \pmod{4}$. We have $I = (x^2+x+1,y^2,z^2,xy+z+y,xz+y,yz)$ and a prime ideal $J = (z,y)$ satisfying $J^2=0$. This gives $(2) = \mathfrak{p}^2 \implies e=2,\ f=2$. Hence, $s=2$ by theorem \ref{1.2}.
\\ \\
\textbf{Case 3.4.}  $ m\equiv n \equiv k \equiv 1 \pmod{4}$ and the basis of $\mathcal{O}_K$ is $\{1, \frac{1+\sqrt{m}}{2}, \frac{1+\sqrt{n}}{2},(\frac{1+\sqrt{m}}{2})(\frac{1+\sqrt{k}}{2})  \} = \{1,a,b,c\}$. 
\begin{center}
\begin{tabular}{@{}l@{}}
$a^2 = \frac{m-1}{4} + \frac{1+\sqrt{m}}{2}   $,\\
$b^2 = \frac{n-1}{4} + \frac{1+\sqrt{n}}{2}   $,\\
$c^2  =  \frac{(m-1)(k-1)}{16}  + \frac{m-1}{4} \cdot \frac{m+d}{2d} + \frac{k-m}{4}a + \frac{m}{d} \cdot \frac{1-m}{4}b + \frac{m+1}{2}c  $,\\
$ab = \frac{m-1}{4} + \frac{1-d}{2}a + \frac{1-m}{2}b + dc $,\\
$ac = \frac{m-1}{4} \cdot \frac{m+d}{2d} + \frac{1-m}{4}a + \frac{m}{d} \cdot \frac{1-m}{4}b + \frac{m+1}{2}c  $,\\
$bc = \frac{m-1}{4} \cdot \frac{n+d}{2d} +  \frac{\frac{n}{d}-d}{4}a + \frac{1-m}{4}b+ \frac{d+1}{2}c  $.
\end{tabular}
\end{center}
Because $k = \frac{mn}{d^2} \equiv mn \pmod{8}$ we have to distinguish four cases:
\\ \\
(a) $m \equiv n \equiv k \equiv 1 \pmod{8}$. We have $I = (x^2+x,y^2+y,z^2+z,xy+z,xz+z,yz+z)$. Using Macaulay2 one can find four different prime ideals $A = (z,y,x)$, $B = (z,y,x+1) $, $C = (z,y+1,x)$, $D = (z+1,y+1,x+1) $ such that $ABCD = 0.$ This implies that $ (2) = \mathfrak{p}_1\mathfrak{p}_2\mathfrak{p}_3\mathfrak{p}_4$. Hence for each $\mathfrak{p}_i$, $i=1,2,3,4$ we get $ e=f=1$ and $ s=15$ by Theorem \ref{1.1}.
\\ \\
(b) $m \equiv n \equiv 5,\  k \equiv 1 \pmod{8}$. Modulo 2 we get:
\begin{center}
\begin{tabular}{@{}l@{}}
$a^2 \equiv 1 +a$,\\
$b^2 \equiv 1 + b$,\\
$c^2 \equiv 1 +a + b + c \text{ or }  a + b + c$, \\
$ab \equiv 1+c \text{ or } 1+a+c$, \\
$ac \equiv  1+a + b + c \text{ or }  a + b + c$, \\
$bc \equiv 1+a + b + c \text{ or } a + b $.
\end{tabular}
\end{center}
Where all the 'or'-s depend on $d \pmod{4}$.

If $d \equiv 1 \pmod{4}$ then $I = (x^2+1+x,y^2+1+y,z^2+1+x+y+z,xy+1+z,xz+1+x+y+z,yz+1+x+y+z)$ and using Macaulay2 we get prime ideals $A=(z+1,y)$ and $B = (y+1,x+z+1)$ satisfying $AB = 0$. Hence $  (2) = \mathfrak{p}_1\mathfrak{p}_2 \implies e=1,f=2$ and $s=2$ by Theorem \ref{1.2}.

If $d \equiv 3 \pmod{4}$ then $I = (x^2+1+x,y^2+1+y,z^2+x+y+z,xy+1+x+z,xz+x+y+z,yz+x+y) $ and we get prime ideals $A = (z,x+y)$ and $B = (y+z+1,x+z)$ such that $AB = 0$. This implies $ (2) = \mathfrak{p}_1\mathfrak{p}_2 $ hence, $ e=1,f=2$ and $ s=2$ by Theorem \ref{1.2}.
\\ \\
The last remaining cases: (c) $m \equiv k \equiv 5, n \equiv 1 \pmod{8}$ and (d) $n \equiv k \equiv 5, m \equiv 1 \pmod{8}$ follow directly from rearranging $n,m,k$. In both cases we have $s=2$.

\end{proof}

\begin{remark}
In order to obtain the factorization of a given prime ideal $(p)$ one usually starts with the Kummer-Dedekind Theorem. Here, we have to find a different way since in almost all cases of biquadratic fields such an extenstion will not be monogenic, hence the aforementioned theorem does not apply.
\end{remark}

\section{Final remarks} \label{gen}

In this article we only computed explicit lower bounds for the fourth level of a very specific case of a quartic number field. In general for any quartic number field we have $ef \leq 4$ (for ramification index and inertial degree of any prime ideal of the integer ring), hence from Theorem \ref{main} combined with Theorems \ref{1.1}, \ref{1.2}, \ref{1.3}, \ref{1.4}, the fourth level of the $\p$-adic completion is computed for every possible case of $e$ and $f$. Thus if for a quartic number field the integral basis is known, using the method from section \ref{step2},  the computation of analogous lower bounds are possible. 

Integral bases for Galois quartic number fields are known \cite{albert1930}, but there are mistakes in the above paper, hence it is not reliable (see \cite{hw1990}).

Let us mention the case $s_4(K)=3$. Using Theorem \ref{main2} we get $s_4(\mathbb{Q}(\sqrt{-2},\sqrt{-6})) \geq 3$, because $8 \parallel -2 + (-6)$. As it turns out, $s_4(\mathbb{Q}(\sqrt{-2},\sqrt{-6}))$ is equal to 3. This is shown by the explicit formula $ \left(\frac{\sqrt{-2} + \sqrt{-6}}{2} \right)^4 + \left(\frac{\sqrt{-2} - \sqrt{-6}}{2} \right)^4 + \left(\sqrt{-2}+1\right)^4 + \left(\sqrt{-2}-1 \right)^4 = 0$. It is an open problem, which values can fourth level attain for fields (cf. \cite{hoffmann2014}). $\mathbb{Q}(\sqrt{-2},\sqrt{-6})$ seems to be the first known number field with fourth level equal to 3. Hence, we dare to formulate the following problem
\begin{prob}
Compute the possible values of $s_4(K)$ where $K$ is a number field.

\end{prob}
It is known that, if $s_4(K)$ is finite then $s_4(K) \leq 16$ by the result of Birch (see \cite{paper-per-piotr}). So far, it is still not known whether the fourth level can be equal to $5,7,8,9,10,11,12,13,14,16$ for number fields.

In every case when we are able to compute lower bounds for the fourth level and the fourth level itself, we always get an equality. As a consequence, we propose the following conjecture:

\begin{conj}
Let $K$ be a number field with a finite fourth level. Then
$$s_4(K)= \max_{\mathfrak{p} \mid (2)} s_4(K_\mathfrak{p}).$$
\end{conj}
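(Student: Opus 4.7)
The inequality $s_4(K) \geq \max_{\mathfrak{p}\mid(2)} s_4(K_\mathfrak{p})$ is immediate, since any global expression $-1 = x_1^4+\dots+x_n^4$ descends to every completion. The substance of the conjecture lies in the reverse inequality. Two consistency checks come first. Finiteness of $s_4(K)$ forces $K$ to have no real embedding, since a sum of fourth powers in $\mathbb{R}$ is non-negative, so every archimedean completion is $\mathbb{C}$ with $s_4(\mathbb{C})=1$. For a non-dyadic prime $\mathfrak{q}$ of $K$, the value $s_4(K_\mathfrak{q})$ is governed by the residue field (again via Hensel) and is bounded by a small absolute constant; the conjecture implicitly asserts that this constant never exceeds the dyadic maximum, which must be verified but is plausible since the dyadic local levels can be as large as $15$.

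Set $n := \max_{\mathfrak{p}\mid(2)} s_4(K_\mathfrak{p})$. My plan is to produce a global witness by weak approximation followed by Hensel. At each dyadic $\mathfrak{p}$ fix a local expression $-1 = x_{1,\mathfrak{p}}^4 + \dots + x_{n,\mathfrak{p}}^4$ in $K_\mathfrak{p}$ with $v_\mathfrak{p}(x_{1,\mathfrak{p}}) = 0$, which is legitimate by the Hensel reduction recalled in the introduction. Choose an integer $N_\mathfrak{p} > 4e(\mathfrak{p}\mid 2)$ for each dyadic $\mathfrak{p}$, and, using weak approximation on the finitely many dyadic places, pick $y_2,\dots,y_n \in K$ with $y_i \equiv x_{i,\mathfrak{p}} \pmod{\mathfrak{p}^{N_\mathfrak{p}}}$ for every $\mathfrak{p}\mid (2)$. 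Setting
\[ c := -1 - y_2^4 - \cdots - y_n^4 \in K, \]
one has $v_\mathfrak{p}\bigl(c - x_{1,\mathfrak{p}}^4\bigr) \geq N_\mathfrak{p}$ at every dyadic $\mathfrak{p}$, so Hensel's lemma applied to $t^4 - c$ at the approximate root $t = x_{1,\mathfrak{p}}$ yields $c \in (K_\mathfrak{p}^\times)^4$ for every $\mathfrak{p}\mid(2)$. The conjecture therefore reduces to showing the $y_i$ can be chosen so that $c$ is actually a fourth power in $K$, for then $-1 = w^4 + y_2^4+\dots+y_n^4$ with $w^4=c$.

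The central obstruction is the Grunwald--Wang phenomenon: an element of $K^\times$ can be a fourth power in every completion without being a global fourth power, the classical case being $16 \in \mathbb{Q}$. However, one has $n-1$ free parameters $y_2,\dots,y_n$, and any perturbation that preserves the dyadic weak-approximation class also preserves local fourth-power-ness at every $\mathfrak{p}\mid(2)$. The key step is to show that, as the $y_i$ range over the affine lattice of admissible perturbations, the resulting $c$ generically avoids the finitely many Grunwald--Wang exceptional cosets of $(K^\times)^4$ in $K^\times$ and is simultaneously a fourth power at every odd prime. At an odd prime $\mathfrak{q}$ this is governed by the splitting of $\mathfrak{q}$ in the Kummer extension $K(\sqrt[4]{c})/K$, and I would combine weak approximation at a well-chosen finite set of auxiliary odd primes with a Chebotarev density argument to force full splitting at any problematic $\mathfrak{q}$.

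The hard part will be simultaneously meeting all these constraints and showing that the Grunwald--Wang exceptional set cannot absorb the entire space of admissible perturbations: this is ultimately a thinness-versus-density question for $K$-rational points on a quartic hypersurface, and is delicate. Plausible alternative approaches include induction on $[K:\mathbb{Q}]$ via the behaviour of fourth levels under quadratic extensions, or a cohomological reformulation in which the obstruction to lifting a dyadic local solution to a global one becomes a Brauer-type class that vanishes once enough parameters are available. The computations summarised in Theorem~\ref{main2}, where the local lower bound is always attained, strongly support the conjecture but do not suggest a uniform mechanism; I expect a full proof will require identifying a framework in which the local-global problem linearises.
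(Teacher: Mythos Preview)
The statement is labelled a \emph{Conjecture} in the paper and is explicitly presented as open; the paper offers no proof, only the computational evidence of Theorem~\ref{main2} and the remark that ``in every case when we are able to compute lower bounds for the fourth level and the fourth level itself, we always get an equality.'' There is therefore no proof in the paper to compare your attempt against.

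Your outline is a natural first approach, and the easy direction is fine, but the hard direction has a genuine gap that you yourself flag. After weak approximation at the dyadic places you obtain an element $c$ that is a fourth power in every $K_\mathfrak{p}$ with $\mathfrak{p}\mid 2$; the remaining task is to arrange that $c$ is a \emph{global} fourth power. You propose to handle the odd places and the Grunwald--Wang cosets by perturbing the $y_i$ and invoking Chebotarev, but Chebotarev and weak approximation let you control only finitely many places at a time, while $c\in(K_\mathfrak{q}^\times)^4$ is an infinite family of conditions. What you are really asking for is a local--global principle (indeed weak approximation at the dyadic places) for the diagonal quartic hypersurface $X_1^4+\cdots+X_n^4+1=0$ over $K$, and this is not known in the needed generality; for small $n$ diagonal quartics can carry genuine Brauer--Manin obstructions. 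Thus the step you describe as ``the hard part will be\ldots'' is not a technicality but essentially the full content of the conjecture, and your sketch does not advance beyond reformulating it. Your side remark that one must also check that no odd completion has larger fourth level than the dyadic maximum is a legitimate consistency issue with the conjecture as stated, but it is orthogonal to producing a proof.
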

Having a result like this, the computation of the fourth level of a number field could be reduced to a finite task, provided that we know how does the ideal $(2)$ factorizes.

\section{Acknowledgments}
At the time of writing this paper, the author is a high school student. The author would like to thank his tutor, Tomasz Kowalczyk, for suggesting this topic. Joint work of the author and Tomasz Kowalczyk is a part of the individual care of highly mathematically talented highschool students held by Jagiellonian University. Additionally, the author would like to thank Jakub Byszewski for suggesting the construction used in the Section 3.

\bibliographystyle{plain}
\bibliography{refs.bib}

\vspace{5pt}



\begin{small}

\noindent
Kazimierz Chomicz

\noindent
Institute of Mathematics

\noindent
Faculty of Mathematics and Computer Science

\noindent
Jagiellonian University

\noindent
ul. Łojasiewicza 6, 30-348 Kraków, Poland

\noindent
e-mail: kazikchomicz@icloud.com

\end{small}

\begin{verbatim}
Macaulay2, version 1.24.05-2230-g629880defb-dirty (vanilla)
with packages: ConwayPolynomials, Elimination, IntegralClosure, InverseSystems, 
Isomorphism, LLLBases, MinimalPrimes, OnlineLookup, Polyhedra, PrimaryDecomposition, 
ReesAlgebra, Saturation, TangentCone, Truncations, Varieties
i1 : k = ZZ/2[x,y,z]
o1 = k 
o1 : PolynomialRing 
i2 : I = ideal(x^2+1,y^2,z^2+x,x*y+y,x*z+z+y,y*z+1+x)
o2 = ideal(x^2+1,y^2,z^2+x,xy+y,xz+y+z,yz+x+1) 
o2 : Ideal of k 
i3 : S = k/I
o3 = S 
o3 : QuotientRing 
i4 : J =  ideal(x+1,y,z+1)
o4 = ideal(x+1,y,z+1) 
o4 : Ideal of S 
i5 : isPrime J
o5 = true 
i6 : J^2
o6 = ideal(x+y+1,x+1) 
o6 : Ideal of S 
i7 : J^3
o7 = ideal(x+y+1) 
o7 : Ideal of S 
i8 : J^4
o8 = ideal(0) 
o8 : Ideal of S
\end{verbatim}

\end{document}